\providecommand{\U}[1]{\protect\rule{.1in}{.1in}}
\definecolor{darkred}{rgb}{0.02,0.14,0.3}
\newtheorem{theorem}{Theorem}
\newtheorem{algorithm}{Algorithm}
\newtheorem{lemma}{Lemma}
\newenvironment{proof}[1][Proof]{\noindent\textbf{#1.} }{\ \rule{0.5em}{0.5em}}
\crefname{proposition}{Proposition}{Propositions}
\Crefname{proposition}{Proposition}{Propositions}
\crefname{cor}{Corollary}{Corollaries}
\Crefname{cor}{Corollary}{Corollaries}
\crefname{table}{Table}{Tables}
\Crefname{table}{Table}{Tables}
\crefname{figure}{Figure}{Figures}
\Crefname{figure}{Figure}{Figures}
\crefname{section}{Section}{Sections}
\Crefname{section}{Section}{Sections}
\crefname{appendix}{Appendix}{Appendices}
\Crefname{appendix}{Appendix}{Appendices}
\crefname{equation}{equation}{equations}
\Crefname{equation}{Equation}{Equations}
\begin{document}

\title{Alternating cyclic extrapolation methods for optimization algorithms}
\author{Nicolas Lepage-Saucier\thanks{Concordia University\indent Corresponding
address: nicolas.lepagesaucier@concordia.ca}}
\date{August 2021}
\maketitle

\begin{abstract}
This article introduces new acceleration methods for fixed-point iterations.
Extrapolations are computed using two or three mappings alternately and a new
type of step length is proposed with good properties for nonlinear
applications. The methods require no problem-specific adaptation and are
especially efficient in high-dimensional contexts. Their computation uses few
objective function evaluations, no matrix inversion and little extra memory. A
convergence analysis is followed by eight applications including gradient
descent acceleration for constrained and unconstrained optimization.
Performances are on par with or better than competitive alternatives. The 
algorithm is available as the Julia package SpeedMapping.jl.

\medskip

%

\noindent
\textit{Keywords:} fixed point; mapping; extrapolation; nonlinear
optimization; acceleration technique; vector sequences;
gradient descent; quasi-Newton

\end{abstract}

\section{Introduction}

Let $F:%
\mathbb{R}
^{n}\rightarrow%
\mathbb{R}
^{n}$ denote a mapping which admits continuous, bounded partial derivatives.
Finding a fixed point of $F$, $x^{\ast}:F(x^{\ast})=x^{\ast}$, is the basis of
countless numerical applications in disciplines like statistics, computer
science, physics, biology, and economics, driving the development of many
general and domain-specific iterative methods. For reviews and insightful
comparisons of the most important ones, see notably \cite{Jbilou2000},
\cite{Ramiere2015}, \cite{Brezinski2018} and the textbook by
\cite{Brezinski2020}.

One of these methods is a vector version of Aitken's $\Delta^{2}$ process
usually attributed to \cite{Lemarechal1971} but also discovered by
\cite{Irons1969} and \cite{Jennings1971}. Following Aitken's notation, define
$\Delta x=F(x)-x$ and note that at the fixed point of $F$, $\Delta x^{\ast
}=\mathbf{0}$. Lemar\'{e}chal's method may be interpreted as a simplified
quasi-Newton method for finding a root of $\Delta x$:%
\begin{equation}
x_{k+1}=x_{k}-M_{k}^{-1}\Delta x_{k}\label{eq:quasi-Newton}%
\end{equation}
where $M_{k}$ is the approximation of the Jacobian of $\Delta x_{k}$. It takes
the simple form $M_{k}=s_{k}^{-1}I$, where $s_{k}$ is a scalar and, contrary
to other quasi-Newton methods, it ignores off-diagonal elements. The Jacobian
is approximated by the secant method using two consecutive evaluations of the
mapping $F$:%
\[
\frac{d\Delta x_{k}}{dx_{k}}\approx\frac{\Delta^{2}x_{k}}{\Delta x_{k}%
}\text{.}%
\]
where $\Delta^{2}x_{k}=\Delta F(x_{k})-\Delta x_{k}=F(F(x_{k}))-2F(x_{k}%
)+x_{k}$, and, in general, $\Delta^{p}x=\Delta^{p-1}F(x)-\Delta^{p-1}x$ for
$p\in%
\mathbb{N}
^{+}$. The step length is%
\begin{equation}
s_{k}=\arg\min_{s}\left\Vert s^{-1}I-\frac{\Delta^{2}x_{k}}{\Delta x_{k}%
}\right\Vert ^{2}=\frac{\langle\Delta^{2}x_{k},\Delta x_{k}\rangle}{\left\Vert
\Delta^{2}x_{k}\right\Vert ^{2}}\label{s}%
\end{equation}

where $\left\Vert y\right\Vert _{p}$ is the $p$-norm of a vector $y$,
$\left\Vert y\right\Vert =\left\Vert y\right\Vert _{2}=\sqrt{y^{\intercal}y}$
is the 2-norm of a vector $y$, and $\langle y,z\rangle=y^{\intercal}z$ is the
inner product of vectors $y$ and $z$. Substituting $s_{k}$ in
(\ref{eq:quasi-Newton}), an iteration of Lemar\'{e}chal's method is%

\begin{equation}
x_{k+1}=x_{k}-\frac{\langle\Delta^{2}x_{k},\Delta x_{k}\rangle}{\left\Vert
\Delta^{2}x_{k}\right\Vert ^{2}}\Delta x_{k}\text{.}\label{eq:Lemarechal}%
\end{equation}

Early on, this simple way of computing the step length proved to be generally
faster and more stable than comparable techniques (see \cite{Henrici1964} or
\cite{Macleod1986}). It was later improved upon by \cite{Barzilai1988}. The
Barzilai-Borwein (BB) method requires a single mapping per iteration by using
the Cauchy step length of the previous iteration. It spawned a rich line of
research in gradient-based optimization for linear problems, a branch of which
investigates the link between the optimal step size and the Hessian spectral
properties (see \cite{Birgin2014} for a good review), to which this paper is
also relevant.

As will be argued, this choice of $s_{k}$ may cause slow convergence if the Jacobian of $\Delta x$ has a wide spectrum. To avoid this drawback, new step lengths will be introduced to target specific deviations of $x$ from its fixed point and considerably speed-up convergence over time. 

For exposition, let us consider a system of linear equations%
\[
Ax=b
\]
where $A\in%
\mathbb{R}
^{n\times n}$ and $b\in%
\mathbb{R}
^{n}$. The solution of the system also constitutes the minimizer of the
quadratic function $f(x)=\frac{1}{2}x^{\intercal}Ax-x^{\intercal}b$ with
gradient $\triangledown f(x)=Ax-b$ and Hessian $A$. To avoid the need for a
change of coordinates, assume $A=diag(\lambda_{1}...\lambda_{n})$ with
positive entries and $m\leq n$ distinct eigenvalues with the smallest and
largest labeled $\lambda_{\min}$ and $\lambda_{\max}$, respectively.

The problem may be formulated as finding the fixed point to the mapping
\[
F(x)=x-(Ax-b)\text{,}%
\]

with unique solution $x^{\ast}$ representing a fixed point of $F$ at which
$Ax^{\ast}=b$. To study the convergence of the method, define an error as
$e=x-x^{\ast}$. Direct computation gives $\Delta x=-(Ax-b)=-Ae$ and, in
general, $\Delta^{p}x=(-A)^{p}e$. Since $A$ is diagonal, $s_{k}$ may be
expressed as%
\begin{equation}
s_{k}=\frac{\langle A^{2}e_{k},-Ae_{k}\rangle}{\left\Vert A^{2}e_{k}%
\right\Vert ^{2}}=-\frac{e_{k}^{\intercal}A^{3}e_{k}}{e_{k}^{\intercal}%
A^{4}e_{k}}=-\frac{\sum_{i=1}^{n}\left(  e_{k}^{(i)}\lambda_{i}^{2}\right)
^{2}\frac{1}{\lambda_{i}}}{\sum_{i=1}^{n}\left(  e_{k}^{(i)}\lambda_{i}%
^{2}\right)  ^{2}}\text{.}\label{s2}%
\end{equation}
We can write (\ref{eq:Lemarechal}) in terms of errors and obtain for the
linear system
\[
e_{k+1}=\left(  I+s_{k}A\right)  e_{k}\text{.}%
\]
In particular, the $j^{th}$ error component may be individually expressed as%

\begin{equation}
e_{k+1}^{(j)}=\left(  1+s_{k}\lambda_{j}\right)  e_{k}^{(j)}\text{,\qquad
}j=1,...,n\text{.} \label{eq:error_component}%
\end{equation}

As shown by (\ref{eq:error_component}), if $s_{k}$ was somehow set exactly
equal to $-\frac{1}{\lambda_{j}}$, the error $e_{k+1}^{(j)}$ would be
perfectly annihilated. It would also remain zero for all subsequent
iterations, regardless of $s_{k+1},s_{k+2},...$. Since $A$ has $m$ distinct
eigenvalues, all error components could be successively reduced to zero in $m$
iterations. Conversely, from a starting point with at least one positive error
component $e_{0}^{(j)}$ for each distinct eigenvalue, $m$ is also the minimum
number of steps necessary to annihilate all $e^{(j)}$ exactly. Of course, as
shown by (\ref{s2}), $s_{k}\in\lbrack-\lambda_{\min}^{-1},-\lambda_{\max}%
^{-1}]$ is a weighted average of all negative inverse eigenvalues. As a
result, all $e^{(j)}$ are imperfectly reduced simultaneously at each iteration
and convergence may be slow if $A$ has a wide spectrum. 

Instead, targeting specific error components may be more efficient. To do so, note
that at the fixed point $x^{\ast}$, any higher-order difference $\Delta^{p}x$
is zero, not only $\Delta x$. The rate of change of $\Delta^{p}x$ for
$p>1$ also carries useful information on the location of the fixed point. Define 
a step length of order $p$ as%

\begin{equation}
s^{(p)}=\frac{\langle\Delta^{p}x,\Delta^{p-1}x\rangle}{\left\Vert \Delta
^{p}x\right\Vert ^{2}}\label{eq:sp}%
\end{equation}

where $s^{(2)}\equiv s$ defined in (\ref{s}) is a special case. In particular,
a \textquotedblleft cubic\textquotedblright\ step length $s^{(3)}$ may
be interpreted as the change in $\Delta^{3}x$ following a change of
$\Delta^{2}x$. In the linear example, $s_{k}^{(3)}$ would be%
\[
s_{k}^{(3)}=-\frac{e_{k}^{\intercal}A^{5}e_{k}}{e_{k}^{\intercal}A^{6}e_{k}%
}=-\frac{\sum_{i=1}^{n}\left(  e_{k}^{(i)}\lambda_{i}^{3}\right)  ^{2}\frac
{1}{\lambda_{i}}}{\sum_{i=1}^{n}\left(  e_{k}^{(i)}\lambda_{i}^{3}\right)
^{2}}\text{.}%
\]
Using $s_{k}^{(3)}$ for approximating the Jacobian of $\Delta x_{k}$ puts
more weights on eigenvalues of larger magnitudes and should annihilate the 
associated errors more aggressively. After such step $k$, a squared step $k+1$
should target errors associated with smaller eigenvalues more precisely
since the error associated with larger ones would have a lighter
weight in $s_{k+1}^{(2)}$. The algorithm can then continue alternating
between cubic and squared extrapolations. 

Schemes based on squared iterations
alone rely on the hope that $\Delta x$ changes at similar rates for all
components of $e$ which is false if $A$ has a wide spectrum. Starting with 
a cubic extrapolation selects a path in
parameter space where the following squared extrapolation suffers less from
this source of error. A single cubic iteration may not converge
faster but alternating between different orders may dynamically enhance
convergence over time.

Of course, computing higher-order step lengths requires more
mappings.\footnote{It may explain why few acceleration schemes requiring
third-order or even higher-order differences have been put forth. Notable
exceptions are \cite{Marder1970}, \cite{Lebedev1995} and \cite{Brezinski1998}%
.} Fortunately, this extra compute cost is mitigated by the ability to use
$s^{(p)}$ for many extrapolations thanks to the important idea of cycling
(\cite{Friedlander1999} and \cite{Raydan2002}). Raydan and Svaiter noted that
the iterates of the BB method are themselves mappings with the same fixed
point $x^{\ast}$ and that the same step length could be used for a second
extrapolation with little extra computational cost. To understand the idea,
denote an intermediate series $y_{k,(i)}$ constructed from Lemar\'{e}chal's
method applied to $x_{k},F(x_{k}),F(F(x_{k})),...$: $y_{k,(1)}=x_{k}%
-s_{k}\Delta x_{k}$, $y_{k,(2)}=F(x_{k})-s_{k}\Delta F(x_{k})$, $...$ . Note
that the series $y_{k,(1)},y_{k,(2)},...$ also converges to $x^{\ast}$ and can
therefore be extrapolated in the same fashion and using the same step length:%
\[
y_{k+1}=y_{k,(1)}+s_{k}(y_{k,(2)}-y_{k,(1)})
\]

After substitution, the squared cyclic extrapolation can be written as%
\[
x_{k+1}=y_{k+1}=x_{k}-2s_{k}\Delta x_{k}+s_{k}^{2}\Delta^{2}x_{k}\text{.}%
\]

This Cauchy-Barzilai-Borwein (CBB) method has good convergence properties. It
was successfully adapted to a variety of nonlinear contexts by Varadhan and
Roland (2004\nocite{Roland2004}, 2005\nocite{Roland2005} and
2008\nocite{Roland2008}), notably to accelerate the expectation maximization
(EM) algorithm (\cite{Ortega1970}, \cite{Dempster1977}) under the label SQUAREM.

For a $p$-order step, cycling may be performed $p$ times in the same recursive
manner. In the linear system previously defined, with a step $s_{k}^{(p)}$ and
$p$-order cycling, the error $e_{k+1}$ becomes%
\[
e_{k+1}=\left(  I+s_{k}^{(p)}A\right)  ^{p}e_{k}\text{.}%
\]
By cycling and alternating between different extrapolation orders, the
proposed scheme is best understood as an alternating cyclic extrapolation
method (ACX). Two specific schemes will be studied empirically: ACX$^{3,2}$,
which alternates between cubic ($p=3$) and squared ($p=2$) extrapolations, and
ACX$^{3,3,2}$ which performs two cubic extrapolations before one squared
extrapolation. They display good overall empirical properties, but higher order extrapolations or other sequences could be considered in specific contexts. For simplicity, ACX will be shorthand for any of these two methods in the rest of the paper. 

The next section describes the algorithm formally and introduces a new type of
step length with good properties for ACX in nonlinear contexts. Section
\ref{sec:proof} establishes Q-linear convergence\ of ACX for linear mappings
in a suitable norm and discusses its convergence properties in nonlinear
contexts. Section \ref{sec:Implementation} discusses stability issues and
implementation details, including an adaptation of ACX to gradient descent
acceleration with dynamic adjustment of the descent step size.

ACX has many advantages. It can be used \textquotedblleft
as-is\textquotedblright\ for a wide variety of problems without extra
specialization and close to no tuning. It requires few to no objective
function evaluations, no matrix inversion, and little extra memory. It stands to
shine in high dimensional applications which are ever more prevalent with the
proliferation of large datasets and sparse data structures.

These advantages are on display in Section \ref{sec:Applications}.
ACX is applied to gradient descent acceleration for various constrained and
unconstrained optimizations including 96 problems from the CUTEst collection
(see \cite{Bongartz1995}). It compares favorably to popular methods like the
limited-memory Broyden--Fletcher--Goldfarb--Shanno algorithm (L-BFGS) (see
\cite{Liu1989} and \cite{Nocedal2006}) and the nonlinear conjugate gradient
method (N-CG) proposed by \cite{Hager2006}. It also performs well for various
other fixed-point iterations compared to competitive alternatives like
the quasi-Newton acceleration of \cite{Zhou2011}, the objective acceleration
approach of \cite{Riseth2019}, the Anderson acceleration version of
\cite{Henderson2019}, and other domain-specifics algorithms.

ACX is available as the Julia package SpeedMapping.jl.

\section{Alternating-orders cyclic extrapolations%
\label{sec:ACX}%
}

A $p$-order ACX iteration may be synthesized as:%
\begin{equation}
x_{k+1}=\sum_{i=0}^{p}\tbinom{p}{i}(\sigma_{k}^{(p)})^{i}\Delta^{i}%
x_{k}\text{\qquad}p\geq2\label{eq:ACX}%
\end{equation}
where $\Delta^{0}x_{k}=x_{k}$, $\binom{p}{i}=\frac{p!}{i!(p-i)!}$ is a
binomial coefficient, and $\sigma_{k}^{(p)}=|s_{k}^{(p)}|\geq0$ is the
absolute value of the step length (\ref{eq:sp}). 

Step lengths other than $s$ (\ref{s}) (sometimes referred to as $s^{BB2}$),
have been suggested in the literature. \cite{Barzilai1988} suggested
$s^{BB1}=\left\Vert \Delta x\right\Vert ^{2}/\langle\Delta^{2}x,\Delta
x\rangle$. \cite{Roland2008} introduced $s^{RV}=-\sqrt{s^{BB1}s^{BB2}%
}=-\left\Vert \Delta x\right\Vert /\left\Vert \Delta^{2}x\right\Vert $ (this
author's notation), commenting that for nonlinear mappings, $s^{BB1}$ can
compromise stability since the denominator $\langle\Delta^{2}x,\Delta
x\rangle$ may be close to zero or even positive. Similarly, $s^{BB2}$ may be
problematic if $\langle\Delta^{2}x,\Delta x\rangle$ is positive and
$\left\Vert \Delta^{2}x\right\Vert ^{2}$ is small. In contrast, $s^{RV}$ has a
guaranteed negative sign for better overall stability. In the case of
ACX$^{3,2}$ and ACX$^{3,3,2}$ however, an even better choice turns out to be
$-\sigma^{(p)}\equiv-|s^{(p)}|$. It is a simple way of avoiding wrong signs
while providing better overall convergence. Other options could be explored given the growing literature on
optimal step sizes for descent algorithms (see for instance \cite{Dai2019} for
a recent contribution).

The ACX algorithm is formalized as follows.%

\noindent\makebox[\linewidth]{\rule{\textwidth}{0.4pt}}%

\vspace*{-0.3cm}

\begin{algorithm}%
\label{algo:ACX}%
Input: a mapping $F:%
\mathbb{R}
^{n}\rightarrow%
\mathbb{R}
^{n}$, a starting point $x_{0}\in%
\mathbb{R}
^{n}$, and a vector of orders $(o_{1},...,o_{P})$ with $o_{j}\in\{2,3\}$.
\end{algorithm}

\vspace*{-0.5cm}%

\noindent\makebox[\linewidth]{\rule{\textwidth}{0.4pt}}%

\smallskip%

\renewcommand{\arraystretch}{1.2}%
%

\noindent
\begin{tabular}
[c]{p{0.6cm}l}%
1 & for $k=0,1,2,...$ until convergence\\
2 & \qquad$p_{k}=o_{(k\operatorname{mod}P)+1}$\\
\multicolumn{1}{l}{3} & \qquad$\Delta^{0}=x_{k}$\\
\multicolumn{1}{l}{4} & \qquad$\Delta^{1}=F(x_{k})-x_{k}$\\
\multicolumn{1}{l}{5} & \qquad$\Delta^{2}=F^{2}(x_{k})-2F(x_{k})+x_{k}$\\
\multicolumn{1}{l}{6} & \qquad If $p_{k}=3$: $\Delta^{3}=F^{3}(x_{k}%
)-3F^{2}(x_{k})+3F(x_{k})-x_{k}$\\
\multicolumn{1}{l}{7} & \qquad$\sigma_{k}^{(p_{k})}=|\langle\Delta^{p_{k}%
},\Delta^{p_{k}-1}\rangle|/\left\Vert \Delta^{p_{k}}\right\Vert ^{2}$\\
\multicolumn{1}{l}{8} & \qquad$x_{k+1}=\sum_{i=0}^{p_{k}}\tbinom{p_{k}}%
{i}(\sigma_{k}^{(p_{k})})^{i}\Delta^{i}$\\
\multicolumn{1}{l}{9} & end for
\end{tabular}
%

\renewcommand{\arraystretch}{1}%
%

\noindent\makebox[\linewidth]{\rule{\textwidth}{0.4pt}}%

\vspace*{0.1cm}%

\noindent
Note: To improve global convergence, constraints on $\sigma_{k}^{(p_{k})}$ may
be imposed at step 7 and bounds checking on $x_{k+1}$ at step 8. Also, a
preliminary mapping before step 3 may improve the convergence of certain
algorithms. See Section \ref{sec:Implementation} for implementation detail.

\vspace*{-0.3cm}%

\noindent\makebox[\linewidth]{\rule{\textwidth}{0.4pt}}%

\vspace*{0.2cm}

Section \ref{sec:proof} shows that for linear maps, Algorithm \ref{algo:ACX}
produces a sequence $x_{k}$ converging to a fixed point of $F$. Before
presenting the proof, the advantage of alternating orders may be illustrated
by revisiting the linear example of \cite{Barzilai1988} with $A=diag(20,10,$
$2,1)$, $b=(1,1,1,1)^{\intercal}$, starting point $x_{0}=(0,0,0,0)^{\intercal
}$ and solution $x^{\ast}=(20^{-1},10^{-1},2^{-1},1)^{\intercal}$. The
convergence of ACX$^{3,2}$ will be compared with that of ACX$^{2}$, a purely
squared scheme equivalent to the CBB method in the linear case. The stopping
criterion is $\left\Vert \Delta x_{k}\right\Vert _{2}\leq10^{-8}$. Figure
\ref{fig:linear} shows the trajectory of $|e_{1}|$ on the horizontal axis and
$|e_{2}|$ on the vertical axis for both algorithms. The starting point is
$(20^{-1},10^{-1})$ at the top right. The ACX$^{2}$ method needs 34 gradient
evaluations before convergence but the ACX$^{3,2}$ only needs 20 (meanwhile,
the BB method requires 25 gradient evaluations and steepest descent needs
314). On the ACX$^{2}$ trajectory shown in dotted lines, each iteration
accomplishes a mild reduction of the errors. Substantial decrease of $|e_{2}|$
only occurs at iteration 5. The ACX$^{3,2}$ trajectory in dash-dotted lines
shows errors being annihilated more aggressively; $|e_{1}|$ and $|e_{2}|$ must
only be reduced substantially twice before convergence. While the benefits of
alternating are not always so large for small linear examples, Section
\ref{sec:Applications} will show how significant they can be in nonlinear
multivariate contexts.%

\begin{figure}[tbp]%
\begin{minipage}{0.48\textwidth}%
\begin{center}
\includegraphics[
height=7.0973cm,
width=7.0973cm
]%
{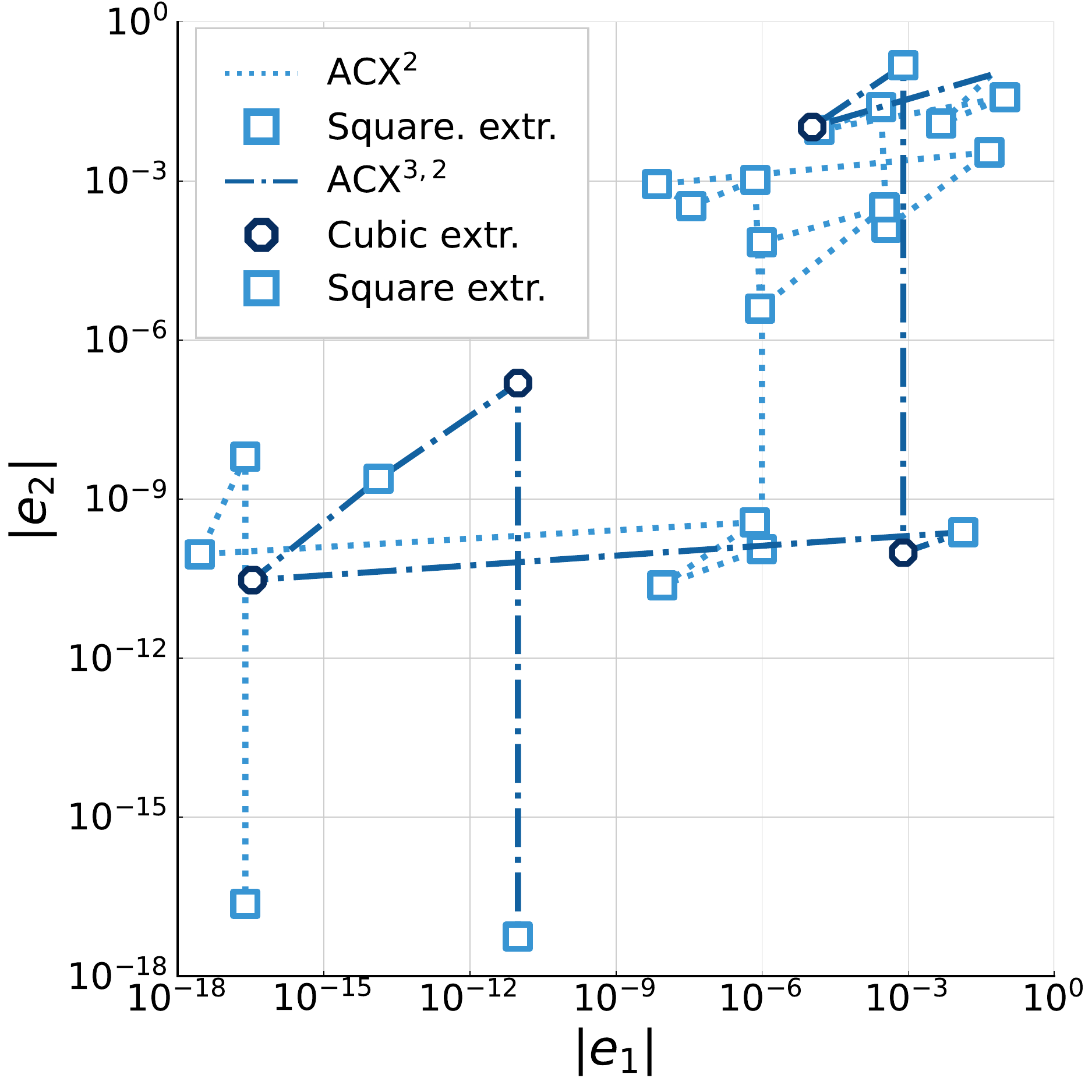}%
\end{center}
\caption
{Convergence of $|e_1|$ and $|e_2|$ for the linear system, from initial values of $(1/20,1/10)$ towards the solution $(0,0)$ for ACX$^2$ and ACX$^{3,2}%
$}%
\label{fig:linear}%
\end{minipage}%
\hfill%
\begin{minipage}{0.48\textwidth}%
\begin{center}
\includegraphics[
height=7.0973cm,
width=7.0973cm
]%
{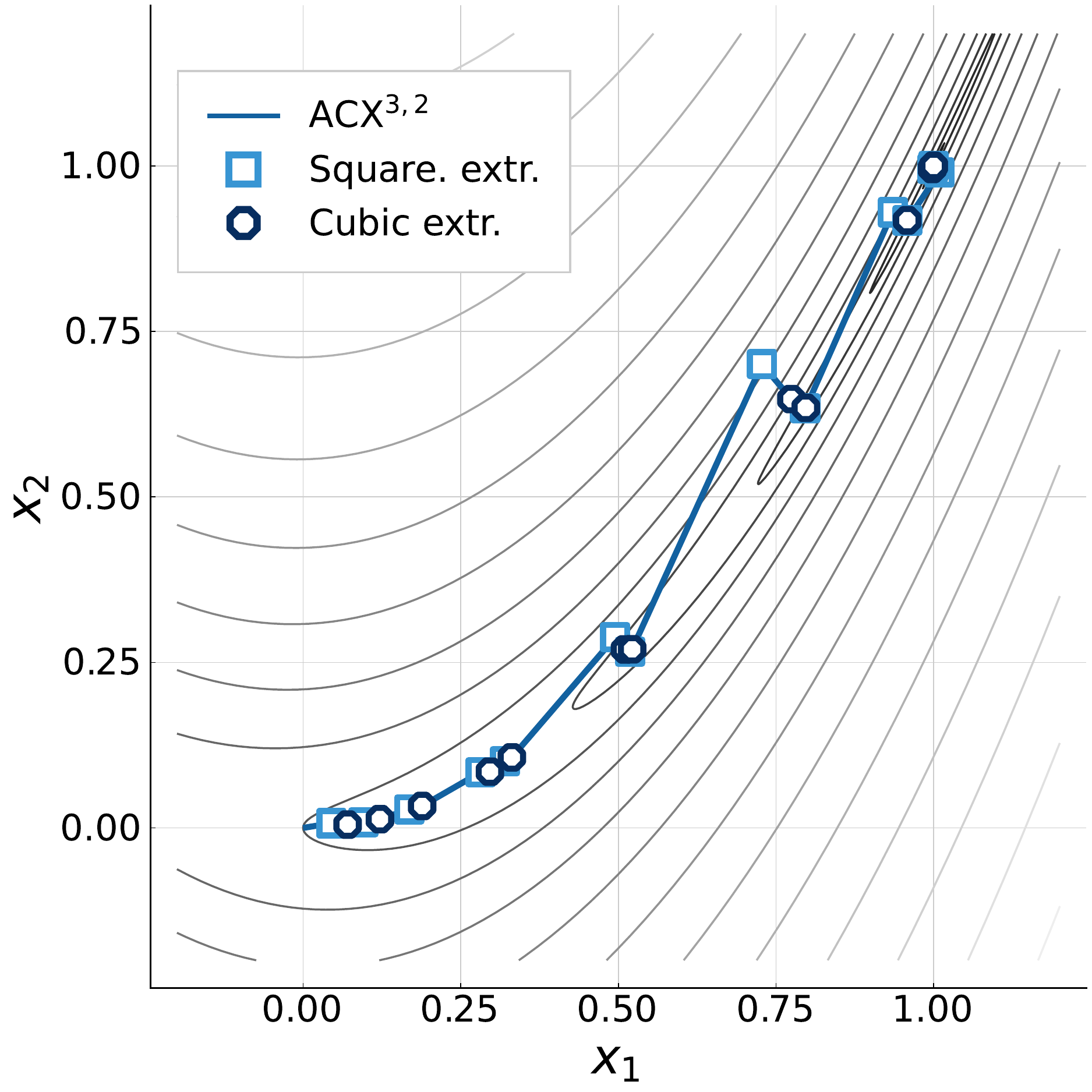}%
\end{center}
\caption{Convergence of ACX$^{3,2}%
$ for the 2-parameter Rosenbrock function from (0,0) towards the solution (1,1)\newline
}%
\label{fig:rosenbrock}%
\end{minipage}%
\end{figure}%

\section{Convergence of ACX%
\label{sec:proof}%
}

Studying the convergence of ACX systematically for linear systems of equations
is a good approximation for the behavior of a general mapping $F$ around its
fixed point $x^{\ast}$. \cite{Raydan2002} has shown that the CBB method
converges Q-linearly in an appropriate norm. The following proof extends the
result to any ACX algorithm.

Consider the linear system of equations defined $Qx=b$ where $Q$ is symmetric
positive definite. Define the elliptic norm%
\[
\left\Vert x\right\Vert _{Q^{-1}}=\sqrt{x^{\intercal}Q^{-1}x}%
\]
induced by the inner product $\left\langle .,.\right\rangle _{Q^{-1}}$%

\[
\left\langle x,y\right\rangle _{Q^{-1}}=x^{\intercal}Q^{-1}y\text{.}%
\]

The inner product satisfies the Cauchy-Schwarz inequality:%

\[
\left\Vert x\right\Vert _{Q^{-1}}^{2}\cdot\left\Vert y\right\Vert _{Q^{-1}%
}^{2}\geq\left\langle x,y\right\rangle _{Q^{-1}}^{2}%
\]
for vectors $x,y$.

To simplify the following computation, let us introduce the function%
\[
q_{p}(e)=e^{\intercal}Q^{p}e\text{.}%
\]
for some vector $e$ and the special case $q_{p}(e_{k})\equiv q_{p}$ for
$e_{k}$. Since $Q$ is positive definite, $q_{p}(e)\geq0$ $\forall$ $p\in%
\mathbb{R}
$. Observe notably that $\left\Vert e\right\Vert _{Q^{-1}}^{2}=e^{\intercal
}Q^{-1}e=q_{-1}(e)$.

Finally, let us introduce a useful lemma proven in appendix:

\begin{lemma}%
\label{lemma1}%
\label{Lemma1}For $x,y$, elements of a commutative ring, and $p\in%
\mathbb{N}
^{+}\setminus\{0,1\}$, $\left(  x+y\right)  ^{p}$ may be decomposed as%
\begin{equation}
(x+y)^{p}=x^{p}+y^{p}-\sum_{i=1}^{\left\lfloor p/2\right\rfloor }%
p(p-i-1)!\frac{(-xy)^{i}}{i!}\frac{(x+y)^{p-2j}}{(p-2j)!}\text{.}%
\label{eq:Lemma1}%
\end{equation}
where $\left\lfloor a\right\rfloor $ outputs the greatest integer less than or
equal to $a$.
\end{lemma}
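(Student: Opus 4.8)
The plan is to recognize \cref{eq:Lemma1} as the classical Waring-type expansion of the power sum $x^p+y^p$ in the elementary symmetric quantities $s=x+y$ and $t=xy$. First I would observe that, although the statement is phrased over an arbitrary commutative ring, each coefficient $p(p-i-1)!/\big(i!\,(p-2i)!\big)=\tfrac{p}{p-i}\binom{p-i}{i}$ is in fact a nonnegative integer; the factorials therefore combine into an integer multiple of a ring element, so the claim is a polynomial identity in $\mathbb{Z}[x,y]$ and, via the substitution homomorphism, then holds in every commutative ring. Hence it suffices to work in $\mathbb{Z}[x,y]$. Solving \cref{eq:Lemma1} for $x^p+y^p$ and absorbing $(x+y)^p$ as the $i=0$ summand (where the coefficient is $1$), the lemma becomes equivalent to
\[
x^p+y^p=\sum_{i=0}^{\lfloor p/2\rfloor}(-1)^i\,c(p,i)\,s^{\,p-2i}\,t^{\,i},\qquad c(p,i):=\frac{p\,(p-i-1)!}{i!\,(p-2i)!},
\]
which is the identity I would actually establish.

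The proof would proceed by induction on $p$ through the Lucas recurrence. On the left, the power sums $P_p:=x^p+y^p$ satisfy $P_p=s\,P_{p-1}-t\,P_{p-2}$, a consequence of the one-line identity $x^p+y^p=(x+y)(x^{p-1}+y^{p-1})-xy(x^{p-2}+y^{p-2})$, together with $P_1=s$ and $P_2=s^2-2t$. It therefore suffices to show that the right-hand sums $W_p$ obey the same recurrence and the same two initial values. The initial values $W_1=s$ and $W_2=s^2-2t$ are immediate, so the content is the step $W_p=s\,W_{p-1}-t\,W_{p-2}$ for $p\ge 3$. Matching the coefficient of $s^{p-2i}t^{i}$ on each side—the term $s\cdot s^{(p-1)-2i}t^i$ coming from $sW_{p-1}$ and the term $t\cdot s^{(p-2)-2(i-1)}t^{i-1}$ from $tW_{p-2}$—reduces, after cancelling the common sign $(-1)^i$, to the purely combinatorial recurrence
\[
c(p,i)=c(p-1,i)+c(p-2,i-1),
\]
with the convention that $c(p,i)=0$ whenever $i<0$ or $i>\lfloor p/2\rfloor$.

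This last identity is the technical heart and is checked by a direct calculation: writing the three coefficients over the common factor $(p-i-2)!/\big(i!\,(p-2i)!\big)$ collapses it to the scalar identity $(p-1)(p-2i)+(p-2)\,i=p\,(p-i-1)$, both sides expanding to $p^2-pi-p$. The one place demanding care—and the main obstacle—is the boundary bookkeeping, since the upper limit $\lfloor p/2\rfloor$ shifts between $p$, $p-1$, $p-2$ and depends on the parity of $p$; at the extreme index, $c(p-1,i)$ or $c(p-2,i-1)$ may fall outside its valid range and must be read as $0$ under the convention $1/(\text{negative integer})!=0$. I would verify the extreme index explicitly in each parity class: for even $p=2m$ one has $c(2m,m)=2=c(2m-2,m-1)$ while $c(2m-1,m)=0$, and for odd $p=2m+1$ one has $c(2m+1,m)=2m+1=c(2m,m)+c(2m-1,m-1)$, so the coefficient recurrence holds across the whole range. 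With the two base cases and this recurrence in hand, induction yields $W_p=P_p$ for all $p\ge 2$, which is exactly \cref{eq:Lemma1}.
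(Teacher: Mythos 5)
Your proof is correct, but it takes a genuinely different route from the paper's. You recognize \cref{eq:Lemma1} (reading the stray index $j$ as $i$, as the paper clearly intends) as the Girard--Waring expansion of $x^p+y^p$ in $s=x+y$ and $t=xy$, and you prove it by induction on $p$: the power sums satisfy $P_p=sP_{p-1}-tP_{p-2}$, and the candidate right-hand side satisfies the same recurrence because the coefficients obey $c(p,i)=c(p-1,i)+c(p-2,i-1)$, which you verify including the parity-dependent boundary indices. The paper instead works at fixed $p$ entirely inside the binomial expansion: it writes $(x+y)^p=x^p+y^p+pxy\,R_1$, defines a family of remainder sums $R_i$, derives the one-step recursion $R_i=(x+y)^{p-2i}-xy\,\tfrac{(p-2i)(p-2i-1)}{(i+1)(p-i-1)}R_{i+1}$ with terminal cases $R_{p/2}=1$ or $R_{(p-1)/2}=x+y$, and unwinds it to peel off the correction terms one power of $xy$ at a time. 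Your argument is the more standard and, I think, cleaner one: the coefficient recurrence is a two-line computation, whereas the paper's manipulation of the inner sums $R_i$ requires more bookkeeping. You also add a point the paper glosses over: since the statement is over an arbitrary commutative ring, the factorial quotients must be shown to combine into integers ($c(p,i)=\tfrac{p}{p-i}\binom{p-i}{i}=\binom{p-i}{i}+\binom{p-i-1}{i-1}$), which your induction makes transparent. What the paper's route buys in exchange is that it constructs the identity directly from the binomial theorem without needing to know the closed form in advance, and it stays within a single value of $p$ rather than invoking a recurrence across orders.
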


\begin{theorem}
The sequence $\{x_{k}\}$ generated by the ACX$^{p_{1},...,p_{P}}$ method
(\ref{eq:ACX}) applied to the mapping $F(x)=x-(Qx-b)$ converges Q-linearly in
the norm $Q^{-1}$ for any set of values $p_{1},...,p_{P}$ with $p_{j}\geq2$.%
\label{theorem1}%

\end{theorem}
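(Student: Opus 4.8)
The plan is to track the error $e_k=x_k-x^{\ast}$ and to show that its $Q^{-1}$-norm contracts by a fixed factor at every iteration. First I would rewrite the iteration in terms of the error. Since $F(x)=x-(Qx-b)$ gives $\Delta^{i}x_k=(-Q)^{i}e_k$, substituting into (\ref{eq:ACX}) and collecting the binomial sum yields the clean recurrence
\begin{equation}
e_{k+1}=\bigl(I-\sigma_k^{(p_k)}Q\bigr)^{p_k}e_k,
\end{equation}
because $\sum_{i=0}^{p}\binom{p}{i}\sigma^{i}(-Q)^{i}=(I-\sigma Q)^{p}$. Writing $\sigma_k=\sigma_k^{(p_k)}$ and using $\Delta^{p}x=(-Q)^{p}e$, the step length in the linear case is $\sigma_k=q_{2p_k-1}/q_{2p_k}$, a positive weighted average of the inverse eigenvalues of $Q$, so $\sigma_k\in[\lambda_{\max}^{-1},\lambda_{\min}^{-1}]$ and is well defined whenever $e_k\neq\mathbf{0}$.

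Next I would pass to the spectral basis of $Q$. Writing $e_k=\sum_i c_i v_i$ with $Qv_i=\lambda_i v_i$ and setting $w_i=c_i^{2}\geq 0$, each $q_r=e_k^{\intercal}Q^{r}e_k=\sum_i w_i\lambda_i^{r}$ is a moment of the measure $\{w_i\}$, and $\|e_{k+1}\|_{Q^{-1}}^{2}=\sum_i w_i\lambda_i^{-1}(1-\sigma_k\lambda_i)^{2p_k}$ while $\|e_k\|_{Q^{-1}}^{2}=q_{-1}$. The whole theorem then reduces to the one-step bound $\|e_{k+1}\|_{Q^{-1}}^{2}\leq c\,\|e_k\|_{Q^{-1}}^{2}$ for some constant $c=c(\kappa,p_k)<1$, where $\kappa=\lambda_{\max}/\lambda_{\min}$.

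The algebraic core, which I expect to be the main obstacle, is the strict one-step decrease $\|e_{k+1}\|_{Q^{-1}}^{2}<\|e_k\|_{Q^{-1}}^{2}$ for $e_k\neq\mathbf{0}$. Expanding $(I-\sigma_k Q)^{2p}$ by the binomial theorem turns this into $\sum_{j=1}^{2p}\binom{2p}{j}(-\sigma_k)^{j}q_{j-1}<0$; using the factorization $(1-t)^{2p}-1=t(t-2)\sum_{m=0}^{p-1}(1-t)^{2m}$ together with the defining relation $\sum_i w_i\lambda_i^{2p-1}(1-\sigma_k\lambda_i)=0$ (equivalently, the optimality of $\sigma_k=q_{2p-1}/q_{2p}$, which minimizes $\|(I-sQ)Q^{p-1}e_k\|^{2}$) recasts it as an inequality among the moments $q_r$. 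Here I would invoke the log-convexity $q_r^{2}\leq q_{r-1}q_{r+1}$, which follows from the Cauchy--Schwarz inequality set up above, and apply \cref{lemma1} to organize the binomial expansion for general $p$ into sums of powers so that the moment inequalities can be used term by term. The difficulty is genuine: when $\kappa>2$ some component has $\sigma_k\lambda_i>2$, so $(1-\sigma_k\lambda_i)^{2p}>1$ and a purely term-by-term bound is impossible; the argument must exploit the step-length optimality to show that such components carry correspondingly little weight.

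Finally I would upgrade pointwise decrease to a uniform rate. The ratio $\|e_{k+1}\|_{Q^{-1}}^{2}/\|e_k\|_{Q^{-1}}^{2}$ depends on $e_k$ only through the normalized weights $\{w_i\}$, which lie in the compact probability simplex over the fixed spectrum of $Q$; it is continuous there and, by the previous step, strictly below $1$ at every point (at a single active eigenvalue it equals $0$, since then $\sigma_k=\lambda_i^{-1}$ annihilates the error). Hence its maximum $c(\kappa,p)<1$ is attained. Because the orders $p_k$ cycle through the finite set $\{p_1,\dots,p_P\}$, taking $c=\max_j c(\kappa,p_j)<1$ gives $\|e_{k+1}\|_{Q^{-1}}\leq\sqrt{c}\,\|e_k\|_{Q^{-1}}$ for every $k$, which is exactly Q-linear convergence in the $Q^{-1}$ norm.
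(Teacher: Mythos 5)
Your setup is correct and matches the paper's framework: the error recurrence $e_{k+1}=(I-\sigma_k^{(p_k)}Q)^{p_k}e_k$, the identification $\sigma_k=q_{2p_k-1}/q_{2p_k}\in[\lambda_{\max}^{-1},\lambda_{\min}^{-1}]$, and the reduction to a one-step contraction of $q_{-1}(e_k)=\Vert e_k\Vert_{Q^{-1}}^{2}$ are all right. But the heart of the theorem --- the one-step inequality itself --- is not proved. You correctly diagnose the obstacle (components with $\sigma_k\lambda_i>2$ have $(1-\sigma_k\lambda_i)^{2p}>1$, so no term-by-term bound in the spectral basis can work) and then state that "the argument must exploit the step-length optimality to show that such components carry correspondingly little weight," without showing how. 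That sentence is the entire content of the theorem; as written, the proposal establishes neither strict pointwise decrease nor any rate. Moreover, your fallback compactness argument only yields a non-constructive constant $c(\kappa,p)<1$ and itself presupposes the unproved pointwise strict decrease, so it cannot rescue the gap.

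The paper closes this gap not by weighting components but by an exact algebraic decomposition that makes everything after the leading term manifestly non-negative. Write $\Vert e_{k+1}\Vert_{Q^{-1}}^{2}=e_k^{\intercal}Q^{-1}(I-\sigma_k Q)\,(I-\sigma_k Q)^{2p-1}e_k$, peel off one factor of $(I-\sigma_k Q)$, and expand the remaining odd power $(I-\sigma_k Q)^{2p-1}$ with \cref{lemma1}. This yields
\[
q_{-1}(e_{k+1})=q_{-1}\bigl(1-\theta_{1}-\theta_{2}-\theta_{3}\bigr),
\]
where $\theta_{1}=\sigma_k q_{0}/q_{-1}\geq\lambda_{\min}/\lambda_{\max}$ is a product of two Rayleigh-quotient bounds (this term alone supplies the uniform rate, with no compactness needed), $\theta_{2}$ is proportional to $q_{2p-2}q_{2p}-q_{2p-1}^{2}\geq0$ by Cauchy--Schwarz in the $Q^{-1}$ inner product (your log-convexity observation, used once, in exactly the right place), and $\theta_{3}$ is a sum of terms $c(p,i)\,\sigma_k^{i}\,\Vert Q^{i/2}(I-\sigma_k Q)^{p-i}e_k\Vert_{Q^{-1}}^{2}/q_{-1}\geq0$ because the coefficients $c(p,i)$ are positive and each summand is a squared norm. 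The decomposition sidesteps the $\sigma_k\lambda_i>2$ problem entirely: those components are absorbed into the non-negative pieces $\theta_2$ and $\theta_3$ rather than bounded individually. The conclusion is the explicit rate $\Vert e_{k+1}\Vert_{Q^{-1}}/\Vert e_k\Vert_{Q^{-1}}\leq\sqrt{1-\lambda_{\min}/\lambda_{\max}}$, valid for every order $p\geq2$, which is what your proposal would need to supply and does not.
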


\begin{proof}
After a $p$-order extrapolation, the error $e_{k+1}$ may be expressed as%
\[
e_{k+1}=(I-\sigma_{k}^{(p)}Q)^{p}e_{k}\text{,}%
\]
where $p\geq2$ is the extrapolation order at iteration $k$ and $\sigma
_{k}^{(p)}=\frac{|-e_{k}^{\intercal}Q^{2p-1}e_{k}|}{e_{k}^{\intercal}%
Q^{2p}e_{k}}=\frac{q_{2p-1}}{q_{2p}}$. The squared $Q^{-1}$ norm of $e_{k+1}$
is%
\[
\left\Vert e_{k+1}\right\Vert _{Q^{-1}}^{2}=\left\Vert (I-\sigma_{k}%
^{(p)}Q)^{p}e_{k}\right\Vert _{Q^{-1}}^{2}=e_{k}^{\intercal}(I-\sigma
_{k}^{(p)}Q)^{p}Q^{-1}(I-\sigma_{k}^{(p)}Q)^{p}e_{k}\text{.}%
\]
Rearrange the terms on the right hand side:%
\[
\left\Vert e_{k+1}\right\Vert _{Q^{-1}}^{2}=e_{k}^{\intercal}Q^{-1}%
(I-\sigma_{k}^{(p)}Q)(I-\sigma_{k}^{(p)}Q)^{2p-1}e_{k}\text{.}%
\]
Rewrite the last parenthesis of the right hand side using (\ref{lemma1}) to
get%
\[
\left\Vert e_{k+1}\right\Vert _{Q^{-1}}^{2}=e_{k}^{\intercal}Q^{-1}%
(I-\sigma_{k}^{(p)}Q)\left[
\begin{array}
[c]{c}%
I+(-\sigma_{k}^{(p)}Q)^{2p-1}-\\
\sum_{i=1}^{\left\lfloor (2p-1)/2\right\rfloor }c(p,i)(\sigma_{k}^{(p)}%
Q)^{i}(I-\sigma_{k}^{(p)}Q)^{2p-1-2j}%
\end{array}
\right]  e_{k}\text{.}%
\]

where $c(p,i)=\frac{(2p-1)(2p-i-2)!}{i!(2p-1-2j)!}$. Note that $p\in%
\mathbb{N}
^{+}\setminus\{0,1\}\rightarrow\left\lfloor (2p-1)/2\right\rfloor =p-1$.
Multiplying back the terms outside the bracket and simplifying using the
$q_{p}(e)$ notation, we get%
\[
q_{-1}(e_{k+1})=%
\begin{array}
[c]{c}%
q_{-1}-\sigma_{k}^{(p)}q_{0}-(\sigma_{k}^{(p)})^{2p-1}(q_{2p-2}-\sigma
_{k}^{(p)}q_{2p-1})\\
-\sum_{i=1}^{p-1}c(p,i)(\sigma_{k}^{(p)})^{i}q_{-1}(Q^{i/2}(I-\sigma_{k}%
^{(p)}Q)^{p-i}e_{k})\text{.}%
\end{array}
\]

where $q_{-1}(Q^{i/2}(I-\sigma_{k}^{(p)}Q)^{p-i}e_{k})=e_{k}^{\intercal
}(I-\sigma_{k}^{(p)}Q)^{p-i}Q^{i-1}(I-\sigma_{k}^{(p)}Q)^{p-i}e_{k}$.
Factoring the first $q_{-1}$ of the right hand side, rewrite the expression as%
\[
q_{-1}(e_{k+1})=q_{-1}\cdot\left(  1-\theta_{1}-\theta_{2}-\theta_{3}\right)
\]
where

\begin{itemize}
\item $\theta_{1}=\sigma_{k}^{(p)}\frac{q_{0}}{q_{-1}}$

\item $\theta_{2}=\frac{(\sigma_{k}^{(p)})^{2p-1}}{q_{-1}}(q_{2p-2}-\sigma
_{k}^{(p)}q_{2p-1})=\frac{(\sigma_{k}^{(p)})^{2p-1}}{q_{-1}q_{2p}}%
(q_{2p-2}q_{2p}-q_{2p-1}^{2})$

\item $\theta_{3}=\sum_{i=1}^{p-1}c(p,i)(\sigma_{k}^{(p)})^{i}\frac
{q_{-1}(Q^{i/2}(I-\sigma_{k}^{(p)}Q)^{p-i}e_{k})}{q_{-1}(e_{k})}$.
\end{itemize}

Let us now show that $\theta_{1}\geq\frac{\lambda_{\min}}{\lambda_{\max}}$,
$\theta_{2}\geq0$, and $\theta_{3}\geq0$. Note that $\theta_{1}$ may be
written as the ratio of two Rayleigh quotients, with $\sigma_{k}^{(p)}%
=\frac{q_{2p-1}}{q_{2p}}=\left(  \frac{q_{2p}}{q_{2p-1}}\right)  ^{-1}=\left(
\frac{e_{k}^{\intercal}Q^{p-1/2}QQ^{p-1/2}e_{k}}{e_{k}^{\intercal}%
Q^{p-1/2}Q^{p-1/2}e_{k}}\right)  ^{-1}\in\lbrack\lambda_{\max}^{-1}%
,\lambda_{\min}^{-1}]$ and $\frac{q_{0}}{q_{-1}}=\frac{e_{k}^{\intercal
}Q^{-1/2}QQ^{-1/2}e_{k}}{e_{k}^{\intercal}Q^{-1/2}Q^{-1/2}e_{k}}\in
\lbrack\lambda_{\min},\lambda_{\max}]$. Hence, $\theta_{1}=\frac{q_{2p-1}%
}{q_{2p}}\frac{q_{0}}{q_{-1}}\in\left[  \frac{\lambda_{\min}}{\lambda_{\max}%
},\frac{\lambda_{\max}}{\lambda_{\min}}\right]  $.

Further, note that
\begin{align*}
q_{2p-2} &  =e_{k}^{\intercal}Q^{2p-2}e_{k}=\left\Vert Q^{p-1/2}%
e_{k}\right\Vert _{Q^{-1}}^{2}\\
q_{2p} &  =e_{k}^{\intercal}Q^{2p}e_{k}=\left\Vert Q^{p+1/2}e_{k}\right\Vert
_{Q^{-1}}^{2}\\
q_{2p-1} &  =e_{k}^{\intercal}Q^{2p-1}e_{k}=\langle Q^{p+1/2}e_{k}%
,Q^{p-1/2}e_{k}\rangle_{Q^{-1}}\text{.}%
\end{align*}

By the Cauchy-Schwarz inequality,%
\[
q_{2p-2}q_{2p}-q_{2p-1}^{2}=\left\Vert Q^{p-1/2}e_{k}\right\Vert _{Q^{-1}}%
^{2}\cdot\left\Vert Q^{p+1/2}e_{k}\right\Vert _{Q^{-1}}^{2}-\langle
Q^{p+1/2}e_{k},Q^{p-1/2}e_{k}\rangle_{Q^{-1}}^{2}\geq0\text{.}%
\]
Hence,%
\[
\theta_{2}=\frac{(\sigma_{k}^{(p)})^{2p-1}}{q_{-1}q_{2p}}(q_{2p-2}%
q_{2p}-q_{2p-1}^{2})\geq0\text{.}%
\]
Finally, for $\theta_{3}$, note that $c(p,i)>0$ for $p\geq2,i\leq p-1$ and
that all $\frac{q_{-1}(Q^{i/2}(I-\sigma_{k}^{(p)}Q)^{p-i}e_{k})}{q_{-1}%
(e_{k})}$ terms are Rayleigh quotients with minimum of zero when $\sigma
_{k}^{(p)}=\frac{1}{\lambda_{r}}$, $r\in\{1,...,n\}$.

Thus, we have%
\[
\frac{\left\Vert e_{k+1}\right\Vert _{Q^{-1}}}{\left\Vert e_{k}\right\Vert
_{Q^{-1}}}=\sqrt{1-\theta_{1}-\theta_{2}-\theta_{3}}\leq\sqrt{\frac
{\lambda_{\max}-\lambda_{\min}}{\lambda_{\max}}}%
\]
which establishes the result.
\end{proof}

\bigskip

We may convince ourselves that this linear convergence result is
representative of any nonlinear mapping $F$ in a neighborhood close to its
fixed point $x^{\ast}$. Taking a first-order Taylor approximation of $F$
applied $p$ times:
\[
F^{p}(x)=x^{\ast}+J^{p}\cdot(x-x^{\ast})+o(x-x^{\ast})
\]
where $J$ is the Jacobian of $F$ at $x^{\ast}$. For a $p$-order difference, we
have%
\[
\Delta^{p}x=(J-I)^{p}e+o(e)\text{.}%
\]
Applying a $p$-order extrapolation from $x_{k}$ (near $x^{\ast}$), the error
at iteration $k+1$ may be expressed as%
\[
e_{k+1}=(I-\sigma_{k}^{(p)}(I-J))^{p}e_{k}+o(e_{k})\text{.}%
\]

Hence, in a small neighborhood of $x^{\ast}$, ACX$^{3,2}$ and ACX$^{3,3,2}$
for nonlinear mappings should exhibit similar convergence as a linear map.

Theorem \ref{theorem1} also shows that from any starting point $x_{k}$ in the
neighborhood of $x^{\ast}$, all $p$-order extrapolation show the same worst
case scenario of $\sqrt{1-\lambda_{\min}/\lambda_{\max}}$. What differentiates
ACX is its ability to make better scenarios more likely.

Away from $x^{\ast}$, $J(x)$ may change substantially between each mapping,
and even more between extrapolations. Convergence is not guaranteed, but
alternating between squared and cubic extrapolations may be advantageous like
hybrid optimization algorithms are. Switching among constituent algorithms
helps escape situations such as zigzagging or locally flat objective functions
where a single algorithm would struggle. In fact, \cite{Roland2004} did
mention that SQUAREM -- a purely squared extrapolation scheme -- sometimes
experiences near stagnation or breakdown when $\Delta x$ and $\Delta^{2}x$ are
nearly orthogonal and $s^{BB1}$ or $s^{BB2}$ are used. For ACX$^{3,2}$ and
ACX$^{3,3,2}$, the probability that $\Delta x$ and $\Delta^{2}x$ be orthogonal
and that $\Delta^{2}x$ and $\Delta^{3}x$ also be orthogonal is lower. The next
section discusses implementation strategies to strike a good balance between
speed and stability away from $x^{\ast}$.

\section{Implementation detail and stability%
\label{sec:Implementation}%
}

This section discusses implementation detail of ACX in various situations. The
proposed set of parameter values show good empirical properties but others
could probably work well too.

\subsection*{Adaptive step size for gradient descent acceleration}

Consider minimizing the function $f$ by gradient descent. The mapping is%
\[
F(x)=x-\alpha\nabla f(x)\text{.}%
\]

To be amenable to ACX acceleration, the descent step size $\alpha$ must be
constant within each extrapolation cycle. It must also remain within an
acceptable range of values since ACX is based on second- and third-order
differences. An excessively small $\alpha$ may lead to small $\Delta^{2}x$ or
$\Delta^{3}x$ and large $\sigma^{(2)}$ or $\sigma^{(3)}$, resulting in
imprecise extrapolations when $f$ has weak curvature. An excessively large
$\alpha$ could on the other hand lead to zigzagging and at worst divergence of
the algorithm.

A simple adaptive procedure for $\alpha$ can improve the chance $\sigma^{(p)}$
remains within reasonable bounds as often as possible. The initial step size
is the largest possible $\alpha_{0}$ which \textit{i}) satisfies the
Armijo--Goldstein condition (\cite{Armijo1966}): $f(x_{0}-\alpha_{0}\nabla
f(x_{0}))\leq f(x_{0})-c_{AG}\alpha_{0}\left\Vert \nabla f(x_{0})\right\Vert $
where $c_{AG}\in(0,1)$ is some constant\ and \textit{ii}) does not result in a
large increase in the gradient norm: $\left\Vert \triangledown f(x_{0}%
-\alpha_{0}\nabla f(x_{0}))\right\Vert _{2}\leq L_{n}\left\Vert \nabla
f(x_{0})\right\Vert _{2}$ where $L_{n}\geq1$ is some
constant.\footnote{Additionally, to stabilize the start of ACX$^{3,2}$ and
ACX$^{3,3,2}$, $\sigma_{0}^{(2)}$ is computed at the first iteration. If it is
below 1, a sign $\alpha$ is rather large, the algorithm starts with a squared
extrapolation (an interpolation in that case) rather than a cubic one.} After
each iteration $k$, set $\alpha_{k+1}=\alpha_{k}\theta$ if $\sigma_{k}%
^{(p)}<\underline{L}_{\sigma}$ and $\alpha_{k+1}=\alpha_{k}/\theta$ if
$\sigma_{k}^{(p)}>\bar{L}_{\sigma}$ where $\theta>1$ and where $\underline{L}%
_{\sigma}$ and $\bar{L}_{\sigma}>\underline{L}_{\sigma}$ are lower and upper
thresholds within which $\sigma_{k}^{(p)}$ should preferably remain. The
rationale for this simple step function is that $\sigma_{k}^{(p)}$ sometimes
take very large or very small values for a single iteration. Overcorrecting
$\alpha_{k+1}$ based on such iteration would be a disproportionate response
and lead to worse results in the following iterations. In the empirical
section, the parameters are set to $c_{AG}=0.25$, $L_{n}=2$, $\theta=1.5$,
$\underline{L}_{\sigma}=1$ and $\bar{L}_{\sigma}=2$.\footnote{Note that near
the optimum, if $\alpha_{k}$ is small, $\Delta^{p}x_{k}$ may be too small for
machine precision and lead to an imprecise $\sigma_{k}^{(p)}$. This is
prevented with a progressive approach. Whenever $||\Delta^{p}x_{k}||_{\infty}$
is small considering the available machine precision ($||\Delta^{p}%
x_{k}||_{\infty}<10^{-50}$ in the applications), $\sigma_{k}^{(p)}$ is set to
$1$ and $\alpha_{k+1}$ is set to $\min(1,2^{1+t}\alpha_{k})$, where $t$ is the
total number of times the same situation has occurred in the past.}

\subsection*{Accelerating mappings other than gradient descent}

\subsubsection*{Constraining $\sigma^{(p)}$%
\label{sec:sig_constr}%
}

For mappings with guaranteed improvement in the objective such as the EM
algorithm, the step length may be constrained to $\underline{\sigma}%
^{(p)}=\max(1,\sigma^{(p)})$. Otherwise, in some scenarios, $\left\vert
\langle\Delta^{p}x,\Delta^{p-1}x\rangle\right\vert $ could be close to zero,
making $\sigma^{(p)}$ small as well and leading to slow progress. If the
underlying mapping has guaranteed progress, then $\underline{\sigma}^{(p)}=1$
ensures that ACX makes the same progress as the last mapping.\footnote{This
was also suggested by \cite{Roland2008}. Note that it would not be appropriate
for applications where we often have $\sigma^{p}\in(0,1)$, such as the mapping
operators considered by \cite{Lemarechal1971} with Lipschitz constants
$L\in(0,1)$. To limit zigzagging, a solution is replacing $F$ by $F\circ F$.}

\subsubsection*{Stabilization mappings%
\label{sec:Stab}%
}

In many mapping applications such as the EM algorithm or the
Majorize-Minimization (MM) algorithm (\cite{Lange2016}), the mapping
$F(x_{k})$ takes the form of a constrained optimization given the parameter
values of the previous iteration. For example, for the MM algorithm, the
objective function $f(x)$ is maximized iteratively by the constrained
maximization of a surrogate function $g(x)$. The mapping is $F(x_{k})=\arg
\max_{x}g(x|x_{k-1})$. Since the starting point $x_{0}$ is not in general a
constrained maximum, the value of the objective function $f$ can improve
significantly following the first iteration. In the subsequent iterations,
progress is typically much slower as $x_{k}$ steadily converges from
constrained maximum to constrained maximum toward its fixed point. This also
means that the change in $x\ $after the first mapping may be sizable but be
comparatively modest in the following iterations. Since ACX relies on
extrapolation, using this initial mapping may provide little information on
the true direction of the fixed point. To improve the accuracy of the
extrapolation, an initial \textquotedblleft stabilization
mapping\textquotedblright\ may be computed before each extrapolation. In
practice, it is difficult to anticipate whether convergence will improve
sufficiently to warrant the investment in this extra mapping. In the
applications, it was beneficial for the Poisson mixture application, for
ACX$^{2}$ specifically, and not beneficial for the other applications.

\subsection*{Non-monotonicity%
\label{sec:Global}%
}

ACX does not guarantee steady improvement in the objective at every iteration.
Monotonicity could be enforced for ACX by either reducing the step size or
simply falling back on a recent iterate with guaranteed improvement. This
however may not be beneficial. Consider Figure \ref{fig:rosenbrock} showing
the convergence of ACX$^{3,2}$ for the two-variable Rosenbrock function
starting from $(0,0)$. As can be seen from the contour lines, some of the most
fruitful steps toward the minimum $(1,1)$ are also causing temporary setbacks
in the value of the objective. But the lost progress is quickly regained a few
steps later. Ample testing has shown that reducing the size of these steps to
enforce monotonicity often slows down convergence, sometimes dramatically.
Since a fast algorithm is more valuable than a slow monotonic one, it was not implemented.

\subsection{Backtracking%
\label{sec:Backtracking}%
}

In rare occasions, the algorithm may reach parameter values where the gradient
is undefined or infinite. Such outcome may be the result of $\sigma^{(p)}$
being too large or, for gradient descent, $\alpha$ being too large. In these
situations, the following procedure is proposed.

The algorithm resumes at the best past iterate, labeled $k_{B}$, either the
one with the smallest norm ($\left\Vert \nabla f(x_{k_{B}})\right\Vert $ or
$\left\Vert \Delta x_{k_{B}}\right\Vert $), or with the smallest objective
value $f(x_{k_{B}})$. To increase the stability of the extrapolation, replace
$\sigma_{k_{B}+i}^{(p)}\rightarrow\rho_{\sigma}\sigma_{k_{B}+i}^{(p)}$ for
$i=1,2...$ with $0<\rho_{\sigma}<1$. For gradient descent acceleration, also
replace $\alpha_{k_{B}+1}\rightarrow\rho_{\alpha}\alpha_{k_{B}+1}$ where
$0<\rho_{\alpha}<1$, (and adjust $\alpha$ as usual in the following
iterations). If the norm or the objective eventually improves, the algorithm
resumes with normal extrapolation step lengths. If a new infeasible iterate
occurs before an improvement, the algorithm backtracks again to $k_{B}$ and
restarts a second time with reductions $(\rho_{\alpha}^{2},\rho_{\sigma}^{2}%
)$. Then, in case of failure, with $(\rho_{\alpha}^{3},\rho_{\sigma}^{3})$,
$...$. Such successive reductions in $\sigma^{(p)}$ and $\alpha$ should
eventually lead to progress in a similar way to regular gradient descent with
backtracking and make algorithm failure unlikely. In the empirical section,
$\rho_{\alpha}=\rho_{\sigma}=2$.

\subsection{Bounds checking%
\label{sec:Boundary}%
}

Stalling may occur if an extrapolation leads to a saddle point or to a portion
of the parameter space where $F(x)$ is defined but $\Delta x$ is very small.
Bound checks may prevent this situation If such problematic parameter region
is known to the users. Let $S\in%
\mathbb{R}
^{n}$ be the set of feasible starting points that may be represented as the
Cartesian product of $n$ open intervals: $S=\prod_{i=1}^{n}I^{(i)}$ where
$I^{(i)}=(x_{\min}^{(i)},x_{\max}^{(i)})$ $i\in(1,...,n)$. This simple
representation -- sometimes referred to as a box constraint -- is appropriate
in many applications but could potentially be generalized. Let $x_{k}\in S$ be
a starting point and $x_{k+1}$ be the next extrapolation. The next iterate
with bounds checks is%
\[
\bar{x}_{k+1}^{(i)}=\max(\min(x_{k+1}^{(i)},\omega x_{\max}^{(i)}%
+(1-\omega)x_{k}^{(i)}),\omega x_{\min}^{(i)}+(1-\omega)x_{k}^{(i)})\quad
i\in(1,...,n)
\]
where $\omega\in(0,1)$. This strategy keeps each $\bar{x}_{k+1}^{(i)}$ within
bounds and ensures that no extrapolation covers more than a fraction $\omega$
of the distance between $x_{\max}^{(i)}-x_{k}^{(i)}$ or $x_{k}^{(i)}-x_{\min
}^{(i)}$. A bound may still be reached asymptotically if it does contain the
fixed point $x^{\ast}$. Note that the same bounds checking may be performed
with gradient descent acceleration after each mapping to implement constrained optimization.

\section{Applications
\label{sec:Applications}%
}

This section compares ACX to fast alternatives. For gradient descent
acceleration, the minimization problems are a multivariate Rosenbrock function
with or without constraints,\ a logistic regression, and 96 unconstrained
problems from the CUTEst collection. For general mapping acceleration, the
applications are the EM algorithm for a Poisson admixture model, the EM
algorithm for a proportional hazard regression with interval censoring,
alternating least squares (ALS) applied to rank tensor decomposition, the
power method for finding dominant eigenvalues, and the method of alternating
projection (\cite{vonNeumann1950}, \cite{Halperin1962}) applied to regressions
with high-dimensional fixed effects.

Since the various algorithms vary greatly in terms of gradient and objective
evaluations as well as internal computation, their performances are assessed
by CPU time, presented via the performance profiles of \cite{Dolan2002}. These
graphs show how often each algorithm was within a certain multiple of the best
compute time. Appendix \ref{app:results} also shows the average number
objective function evaluations, gradient evaluations or mappings, evaluation
time (for the draws that converged), and convergence rates.

Most computations were performed in Julia\footnote{Its just-in-time compiler
guarantees little computing overhead. This is important to get accurate ideas
of the relative number of operations required for each method.} with the
exception of the tensor canonical decomposition performed in MATLAB and the
alternating projections application which compares packages of various
languages. The benchmark stopping criterion was $\left\Vert \triangledown
f(x)\right\Vert _{\infty}\leq10^{-7}$ for gradient descent acceleration and
$\left\Vert \Delta x\right\Vert _{\infty}\leq10^{-7}$ for general mapping
applications. To ensure meaningful comparisons, draws for which different
algorithms converged on divergent objective values were discarded. The precise
condition was%
\begin{equation}
|f(x_{T,i})-\min_{j}(f(x_{T,j}))|<10^{-5}\text{\quad}\forall i,\label{eq:crit}%
\end{equation}
where $x_{T,i}$ is the final iterate of any algorithm $i$, and $\min
_{j}(f(x_{T,j}))$ is the minimum over all final iterates $j$, including for
algorithms that have not converged. This rather stringent criterion was
introduced with the CUTEst problems in mind. For each application, 2000 draws
were computed, except for the CUTEst problems and the alternating projections
application. In all applications, the gradient norm was used to track the
progress of ACX. For each application, Appendix \ref{app:results} presents
additional implementation detail and statistics. Information on software,
package versions and hardware used is provided in Appendix \ref{app:Software}.

\subsection*{Gradient descent applications}

For gradient descent applications, the performances of ACX$^{2}$, ACX$^{3,2}$
and ACX$^{3,3,2}$ were compared with the L-BFGS algorithm and a N-CG method
from the package Optim.jl (\cite{mogensen2018optim}). The L-BFGS is
implemented with Hager-Zhang line search and a window of 10 past iterates to
build the Hessian approximation. The N-CG is also implemented with Hager-Zhang
line search. The algorithm combines features of \cite{Hager2006} and
\cite{Hager2013} and multiple revisions to the code since publication.

\subsubsection*{The Rosenbrock function}

The Rosenbrock is a well-known test bed for new algorithms. The test
specification involved finding the unconstrained minimum of a 1000-parameter
version of the function%

\[
f(x^{(1)},...,x^{(N)})=\sum_{i=1}^{N/2}\left[  100\left(  \left(
x^{(2i-1)}\right)  ^{2}-x^{(2i)}\right)  ^{2}+\left(  x^{(2i-1)}-1\right)
^{2}\right]  \text{\qquad}N=1000\text{.}%
\]
For the unconstrained minimization, the starting points $x_{0}^{(i)}$ were
drawn from uniform distributions $U[-5,5]$. A constrained minimization was
also implemented with upper bound $x_{\max}^{(i)}$ sampled from $U[0,1]$ and
starting points $x_{0}^{(i)}$ sampled from $U[-5,0]$. For L-BFGS and N-CG, the
constraint was implemented with barrier penalty using Optim.jl's
fminbox.\footnote{See
https://julianlsolvers.github.io/Optim.jl/stable/\#user/minimization/\#box-constrained-optimization
and
https://github.com/JuliaNLSolvers/Optim.jl/blob/adc5b277b3f915c25233b45f8f2dd61006815e63/src/multivariate/solvers/constrained/fminbox.jl
for more detail.}

The unconstrained minimization results are displayed in Figures
\ref{fig:Rosenbrock} as well as Appendix Table \ref{tab:Rosenbrock}. The three
ACX algorithms show advantageous performances in terms of gradient
evaluations, objective evaluations and compute time, with ACX$^{3,3,2}$ being
fastest 80\% of the time.

The constrained minimization results are shown in Figure
\ref{fig:RosenbrockConstr} and Appendix Table \ref{tab:RosenbrockConstr}.
Here, the simple box constraint implementation of Section \ref{sec:Boundary}
applied to gradient descent combined with ACX is surprisingly efficient. It is
30 to 40 times faster than L-BFGS or N-CG with barrier penalty. This
encouraging result suggests ACX could also be efficient with a more general
set of linear and non-linear constraints.%

\begin{figure}[tbp]%
\begin{minipage}{0.48\textwidth}%
\raisebox{-0cm}{\includegraphics[
height=7.0973cm,
width=7.0973cm
]%
{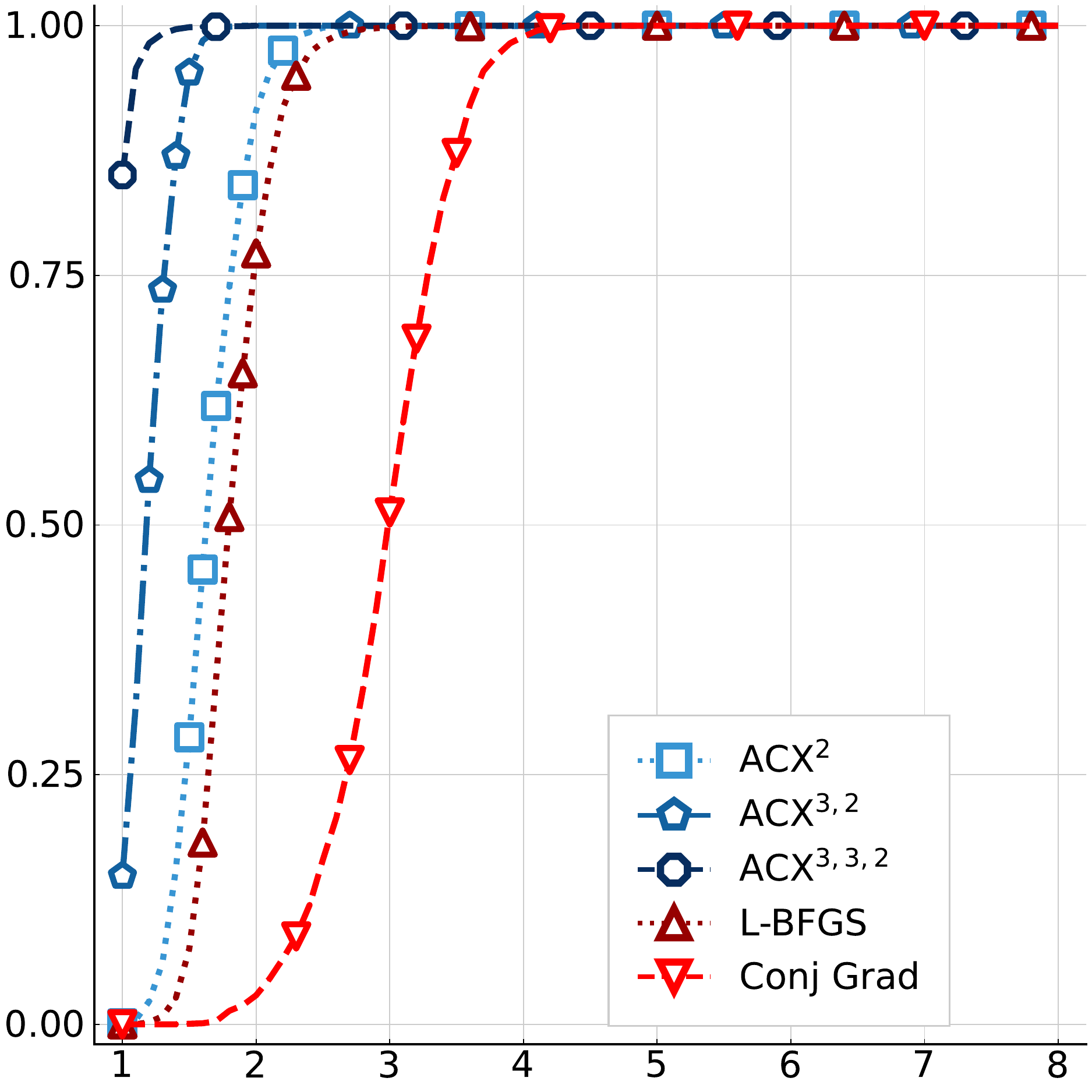}%
}
\caption{Performance profiles for the Unconstrained 1000-parameter Rosenbrock}%
\label{fig:Rosenbrock}%
\end{minipage}%
\hfill%
\begin{minipage}{0.48\textwidth}%
\raisebox{-0cm}{\includegraphics[
height=7.0973cm,
width=7.0973cm
]%
{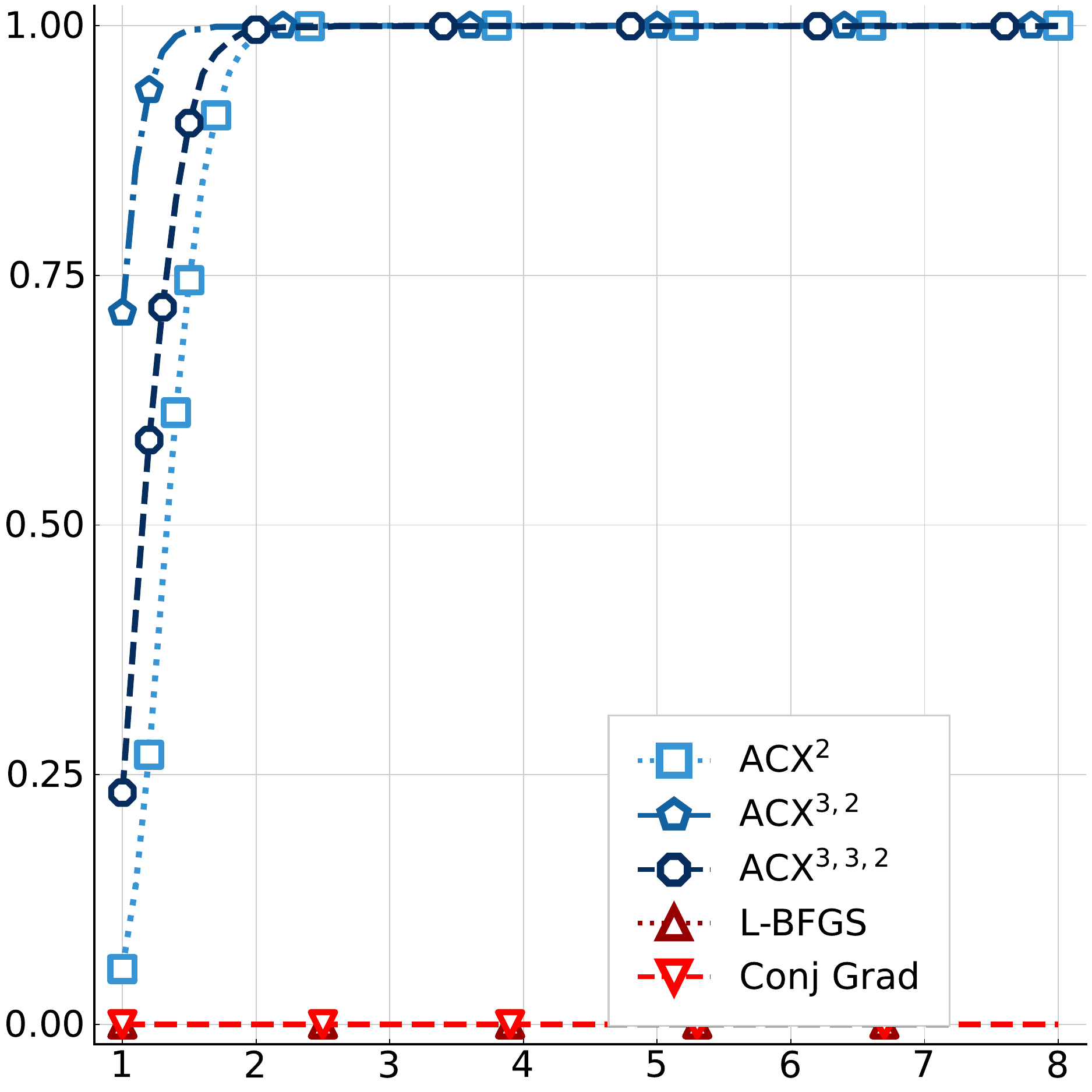}%
}
\caption{Performance profiles for the Constrained 1000-parameter Rosenbrock}%
\label{fig:RosenbrockConstr}%
\end{minipage}%
\end{figure}%

\subsubsection*{A logistic regression}

An advantage of ACX is its limited reliance on objective functions. This may
provide clear benefits for an application like the logistic regression for
which the log likelihood requires taking logs:%

\[
l(y|X,\beta)=\sum_{i=1}^{n}\left[  y_{i}\times x_{i}^{\intercal}\beta
-\log(1+\exp(x_{i}^{\intercal}\beta))\right]  \text{,}%
\]
where $y\in%
\mathbb{R}
^{n},X\in%
\mathbb{R}
^{n\times m}$, but the gradient does not.

To illustrate this advantage, a simulation was conducted with $n=2000$,
$m=100$, with coefficients and the covariates drawn from uniform distributions
$U[-1,1]$ and $X$ containing a column of ones. For each draw, the starting
point was $\beta_{0}=\mathbf{0}$. The obvious speed gains from ACX$^{3,2}$ and
ACX$^{3,3,2}$ can be seen in Figure \ref{fig:logistic}. Appendix Table
\ref{tab:logistic} shows ACX$^{3,2}$ and ACX$^{3,3,2}$ needed slightly fewer
gradient evaluations than the alternatives and close to no objective function
evaluations, making them 6 times faster than the L-BFGS and 10 times faster
than the N-CG.%

\begin{figure}[tbp]%
\begin{minipage}{0.48\textwidth}%
\raisebox{-0cm}{\includegraphics[
height=7.0951cm,
width=7.0951cm
]%
{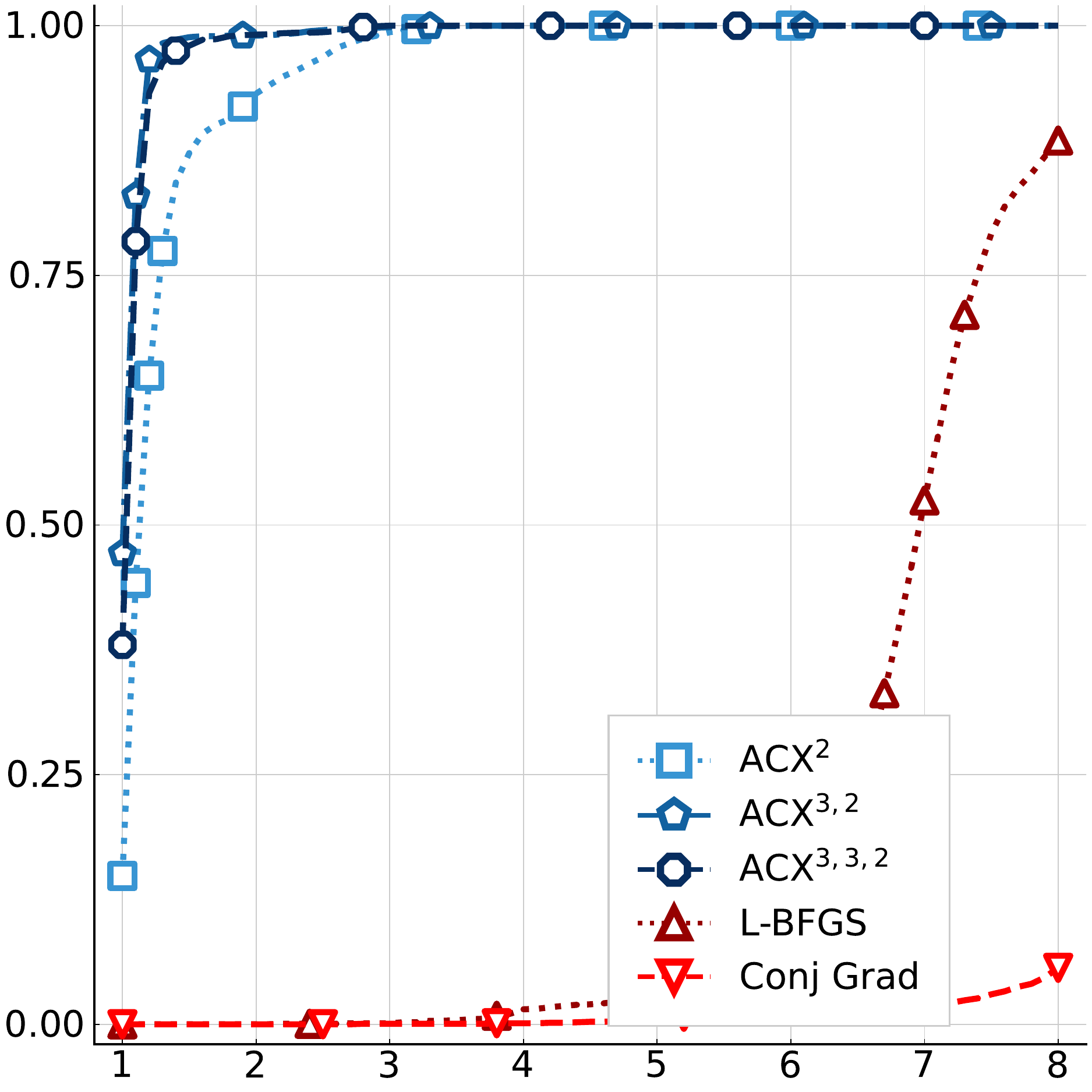}%
}
\caption{Performance profiles for the logistic regression\newline}%
\label{fig:logistic}%
\end{minipage}%
\hfill%
\begin{minipage}{0.48\textwidth}%
\raisebox{-0cm}{\includegraphics[
height=7.0973cm,
width=7.0973cm
]%
{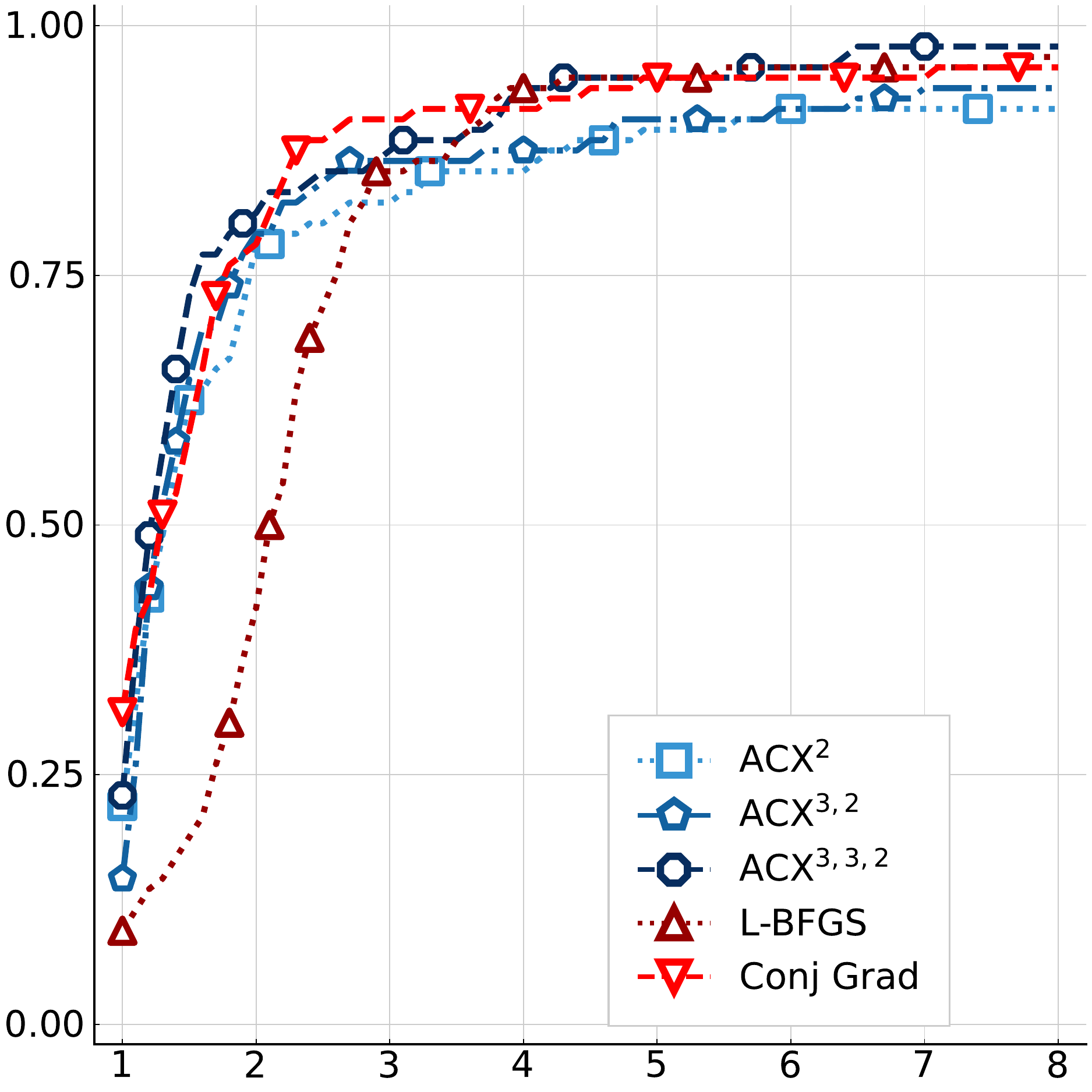}%
}
\caption
{Performance profiles for 96 unconstrained optimization from te CUTEst collection (each problem is one draw)}%
\label{fig:CUTEst}%
\end{minipage}%
\end{figure}%

\subsubsection*{The set of unconstrained minimization problems from the CUTEst
suite}

The CUTEst problem set -- successor of CUTE (Constrained and Unconstrained
Testing Environment) and CUTEr -- has become the benchmark for prototyping new
optimization algorithms. For testing, the unconstrained problems with
objective function defined as either quadratic, sum of squares or
\textquotedblleft other\textquotedblright\ were selected, numbering 154 in
January 2021. Of these were excluded those with a zero initial gradient or
with fewer than 50 parameters. When possible, the default number of parameters
was used. When it was below 50, the number was set to 50 or 100 if available.

Algorithms were allowed to run for at most 100 seconds. A problem was excluded
if no algorithm converged before 10 seconds or if all algorithms converged in
fewer than 5 mappings. To avoid stalling, the initial stopping criterion was
$\left\Vert \triangledown f(x)\right\Vert _{\infty}<10^{-5}$. When the final
objectives diverged such that criterion (\ref{eq:crit}) was not met, the
tolerance was progressively decreased, down to $\left\Vert \triangledown
f(x)\right\Vert _{\infty}<10^{-8}$. If the result still diverged, the number
of parameters was reduced. In the end, 96 problems were kept for comparison.

The resulting performance profiles displayed in Figure \ref{fig:CUTEst} show
ACX$^{3,3,2}$ outperforming the other ACX and frequently the other algorithms
as well. The N-CG also did very well while the L-BFGS was often slower. The
detail on the number of gradient and objective function evaluations, compute
time, convergence status and minimum objective attained is available at sites.google.com/site/nicolaslepagesaucier/output\_CUTEst.txt.

\subsection*{Various mapping applications}

\subsubsection*{The EM algorithm for Poisson admixture model%
\label{sec:mixtures}%
}

The EM algorithm is a ubiquitous method in statisticians' toolbox. While
stable, it can be notoriously slow to converge, motivating the development of
many acceleration methods. 

A classic implementation of the EM algorithm is \cite{Hasselblad1969} who
models of the death notices of women over 80 years old reported in the London
Times over a period of three years. Table \ref{tab:deaths} reproduces the data.%

\begin{table}[!htbp] \centering
\begin{minipage}{0.92\textwidth}%
\captionsetup{justification=raggedright,singlelinecheck=false}%
\caption{Number of death notices}\vspace{0.08cm}\label{TableKey}%
\begin{tabular}
[c]{ccccccccccc}\hline
&  &  &  &  &  &  &  &  &  & \vspace{-0.4cm}\\
\multicolumn{1}{l}{Observed death count ($i$)} & 0 & 1 & 2 & 3 & 4 & 5 & 6 &
7 & 8 & 9\\
\multicolumn{1}{l}{Frequency of occurrence ($y_{i}$)} & 162 & 267 & 271 &
185 & 111 & 61 & 27 & 8 & 3 & 1\vspace{0.1cm}\\\hline
\end{tabular}%
\label{tab:deaths}%
\end{minipage}%
\end{table}%

The data are modeled as a mixture of two Poisson distributions to capture
higher death rates during winter. The likelihood is%
\[
L(y^{(0)}...y^{(9)}|\mu^{(1)},\mu^{(2)},\pi)=\prod_{i=0}^{9}\left[  \pi
e^{-\mu^{(1)}}\frac{(\mu^{(1)})^{i}}{i!}+(1-\pi)e^{-\mu^{(2)}}\frac{(\mu
^{(2)})^{i}}{i!}\right]  ^{y^{(i)}}%
\]
where $\mu_{1}$ and $\mu_{2}$ are the means of the distributions of
subpopulations $1$ and $2$ and $\pi$ is the probability that a random
individual is part of subpopulation $1$. The EM algorithm map is%
\[
\mu_{k+1}^{(1)}=\frac{\sum_{i=0}^{9}y^{(i)}iw_{k}^{(i)}}{\sum_{i=0}^{9}%
y^{(i)}w_{k}^{(i)}}\text{; }\mu_{k+1}^{(2)}=\frac{\sum_{i=0}^{9}%
y^{(i)}i(1-w_{k}^{(i)})}{\sum_{i=0}^{9}y^{(i)}(1-w_{k}^{(i)})}\text{; }%
\pi_{k+1}=\frac{\sum_{i=0}^{9}y^{(i)}w_{k}^{(i)}}{\sum_{i=0}^{9}y^{(i)}}%
\]

where%
\[
w_{k}^{(i)}=\frac{\pi_{k}e^{-\mu_{k}^{(1)}}\left(  \mu_{k}^{(1)}\right)  ^{i}%
}{\pi_{k}e^{-\mu_{k}^{(1)}}\left(  \mu_{k}^{(1)}\right)  ^{i}+(1-\pi
_{k})e^{-\mu_{k}^{(2)}}\left(  \mu_{k}^{(2)}\right)  ^{i}}\text{.}%
\]

For the experiments, random starting points were sampled from uniform
distributions $\pi_{0}\backsim U[0.05,0.95]$, $\mu_{0}^{(i)}\backsim
U[0,20]$\quad$i=1,2$. ACX was implemented with bound checks $\mu^{(i)}\geq0$
$i=1,2$ and $\pi\in\lbrack0,1]$ with a buffer of $\omega=0.9$ and
stabilization mapping. Its performances were compared with those of QNAMM (3)
and DAAREM (2)\footnote{As recommended by the authors, we use $\min
(10,\left\lceil n/2\right\rceil )$ lags where $n$ is the number of
parameters.}, both adapted to Julia from their respective R packages.

The results are displayed in Figure \ref{fig:mixtures} and appendix Table
\ref{tab:mixtures}. With only three parameters to estimate, ACX$^{3,2}$
performed slightly better than ACX$^{3,3,2}$. Both outperformed QNAMM (3) and
DAAREM (2) on all metrics, probably a result of the need to check the
objective and their general computational burdens.%

\begin{figure}[tbp]%
\begin{minipage}{0.48\textwidth}%
%

\raisebox{-0cm}{\includegraphics[
height=7.0973cm,
width=7.0973cm
]%
{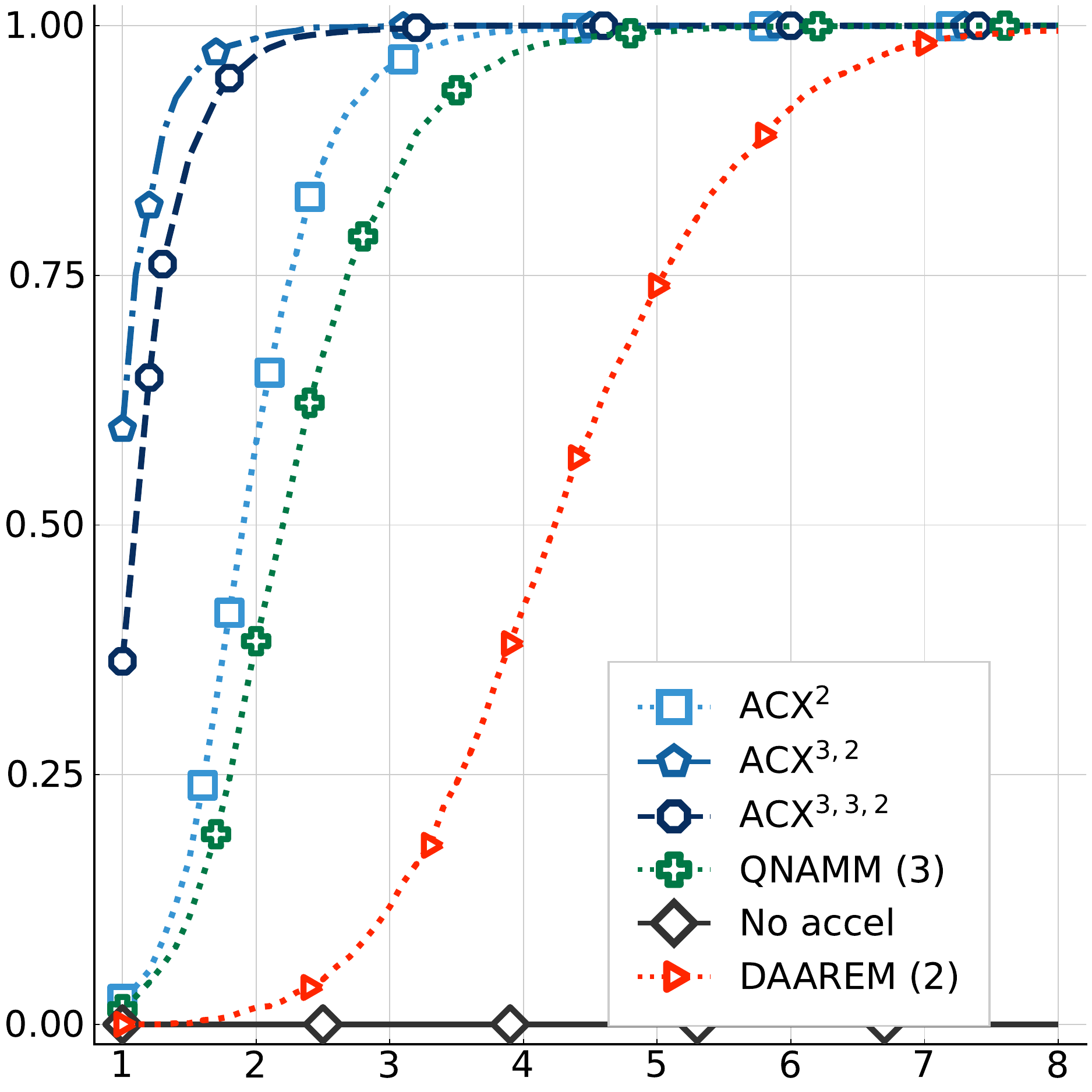}%
}
\caption
{Performance profiles for  the EM algorithm acceleration of Poisson admixtures}%
\label{fig:mixtures}%
\end{minipage}%
\hfill%
\begin{minipage}{0.48\textwidth}
\raisebox{-0cm}{\includegraphics[
height=7.0973cm,
width=7.0973cm
]%
{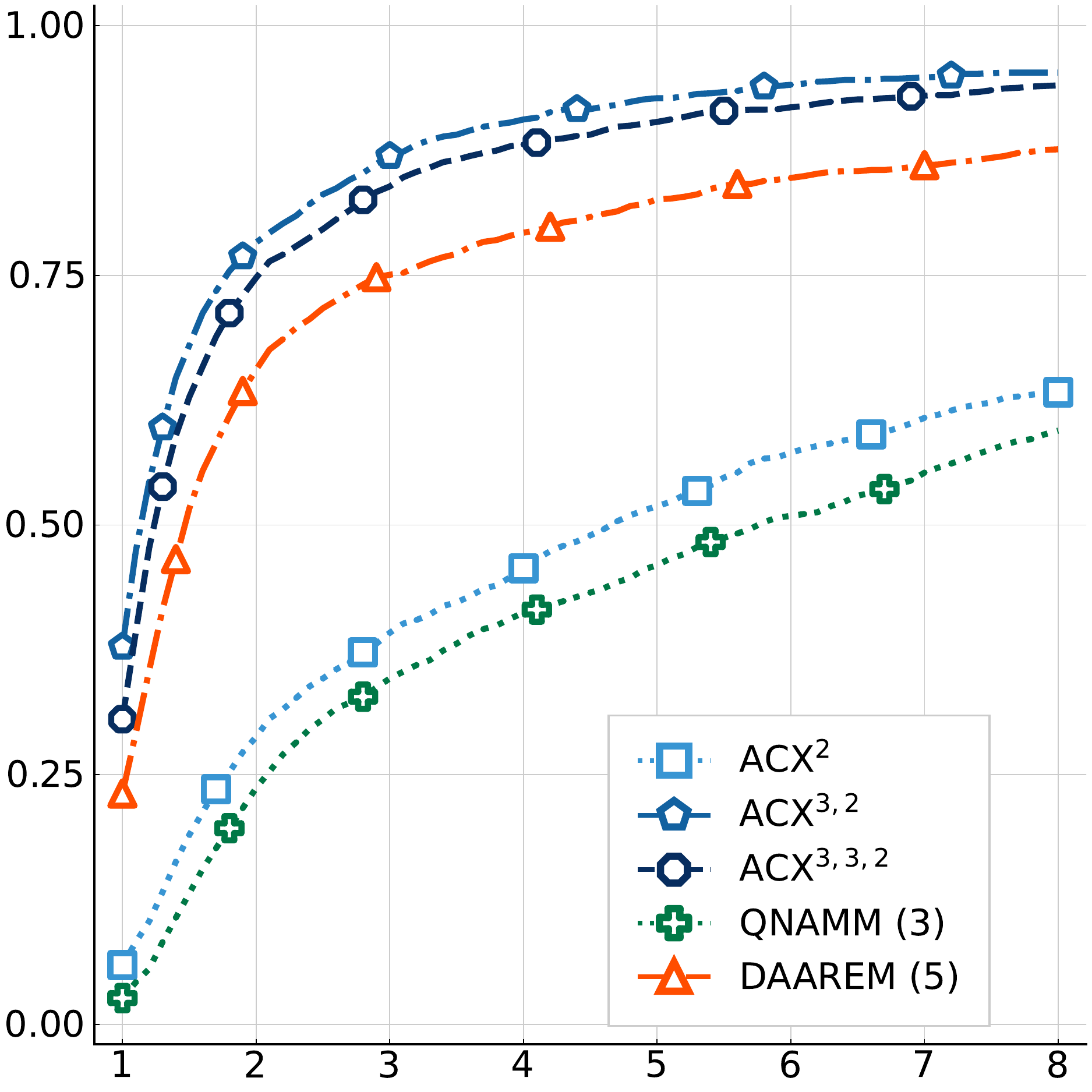}%
}
\caption
{Performance profiles for the EM algorithm for proportional hazard regression with interval censoring
}%
\label{fig:cens}%
\end{minipage}%
\end{figure}%

\subsubsection*{The EM algorithm for proportional hazard regression with
interval censoring}

Another common application of the EM algorithm is the mixed proportional
hazard model. \cite{Wang2016} proposed a new method for estimating a
semiparametric proportional hazard model with interval censoring, a common
complication arising in medical and social studies. Their EM estimation relies
on a two-stage data augmentation with latent Poisson random variables and a
monotone spline representation of the baseline hazard function. The algorithm
is light and simple to implement (see \cite{Wang2016} for details) yet may
benefit greatly from acceleration.

The simulation settings and codes of \cite{Henderson2019} were adapted to
Julia to allow for meaningful comparison. The likelihood for an individual
observation is
\[
L(\delta_{1},\delta_{2},\delta_{3},\mathbf{x})=F(R|\mathbf{x})^{\delta_{1}%
}\{F(R|\mathbf{x})-F(L|\mathbf{x})^{\delta_{2}}\}\{1-F(L|\mathbf{x}%
)^{\delta_{3}}\}
\]
where $\delta_{1}$, $\delta_{2}$, and $\delta_{3}$ indicate either right-,
interval-, and left-censoring, respectively. The failure time $T$ is generated
from the distribution $F(t,\mathbf{x})=1-\exp\{-\Lambda_{0}(t)\exp
(x^{\intercal}\beta)\}$ and the baseline risk is simulated as $\Lambda
_{0}(t)=\log(1+t)+t^{1/2}$. The covariates are $\mathbf{x}=\{x_{1},x_{2}%
,x_{3},x_{4}\}$ where $x_{1},x_{2}\sim N(0,0.5^{2})$ and $x_{3},x_{4}\sim
$Bernoulli$(0.5)$. A censored interval is simulated by first generating
$Y\sim$Exponential$(1)$ and setting either $(L,R)=(Y,\infty)$ if $Y\leq T$ or
$(L,R)=(0,Y)$ if $Y>T$. The baseline risk is modeled as a six-parameter
I-spline, for a total of $10$ parameters to estimate. The sample size is set
to 2000 individuals.

Since the algorithms always converged on widely different objective values,
the allowed discrepancy was set to 2. The results are displayed in Figure
\ref{fig:cens} and appendix Table \ref{tab:EMPH}. ACX$^{3,2}$ was the top
performer in terms of CPU time and mappings used, followed by ACX$^{3,3,2}$.
Both outperformed DAAREM (2), ACX$^{2}$ and QNAMM (3). The non-accelerated EM
algorithm took orders of magnitudes longer to converge.

\subsubsection*{Alternating least squares for tensor rank decomposition}

ALS is an iterative method used in matrix completion, canonical tensor
decomposition and matrix factorization used in online rating systems, signal
processing, vision and graphics, psychometrics, and computational linguistics.
For canonical tensor decomposition, \cite{DeSterck2012} has designed an
iterative algorithm combining an ALS step and the nonlinear generalized
minimal residual method (N-GMRES) (\cite{Saad1986}). \cite{Riseth2019}
improved on the approach with a general-purpose acceleration method named
objective acceleration (O-ACCEL). 

The performance of O-ACCEL was compared to those of ACX and other
general-purpose acceleration using \cite{DeSterck2012}'s main test
specification. First introduced by \cite{Tomasi2006} and \cite{Acar2011}, the
test involves computing a 450-variable approximation of a three-way tensor of
size $50\times50\times50$ with collinearity and noise (see the code for
detail). For the tests, \cite{Riseth2019}'s MATLAB code was adapted and ACX,
QNAMM and DAAREM algorithms were written in MATLAB by the author. ACX was
implemented with $\underline{\sigma}=1$. O-ACCEL and DAAREM worked best with a
window of 10 past iterates and QNAMM worked well with a window of 5 past
iterates. Less performant algorithms like the N-GMRES or L-BFGS were not
tested. 

The results are summarized in Figure \ref{fig:tensor} and appendix Table
\ref{tab:tensor}. ACX$^{3,3,2}$ had a clear but moderate advantage, closely
followed by ACX$^{3,2}$ and QNAMM. O-ACCEL required fewer mappings on average
but its heavier computational burden made it slightly slower.%

\begin{figure}[tbp]%
\begin{minipage}{0.48\textwidth}%
\raisebox{-0cm}{\includegraphics[
height=7.0973cm,
width=7.0973cm
]%
{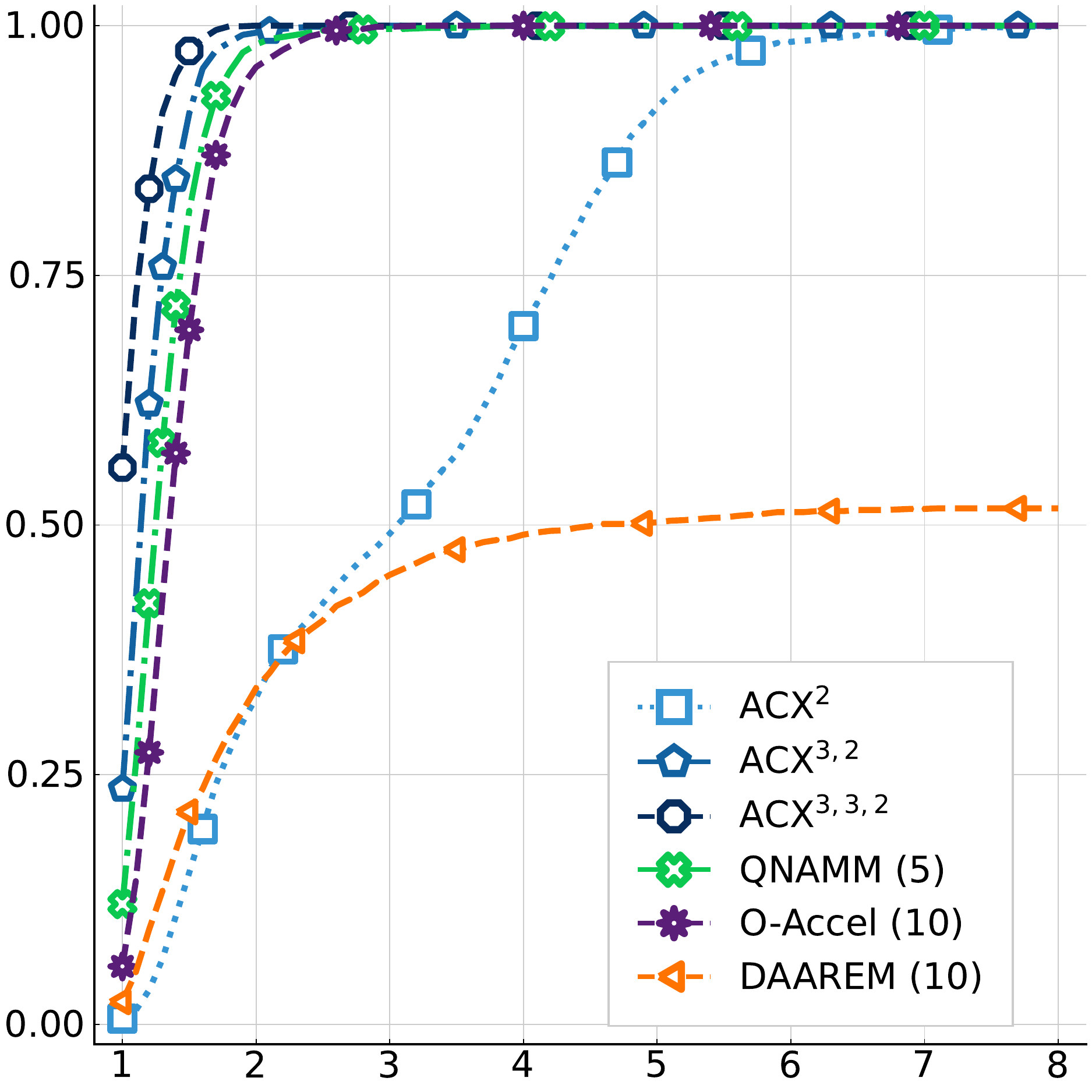}%
}
\caption{Performance profiles for the canonical vector decomposition\newline}%
\label{fig:tensor}
\end{minipage}%
\hfill%
\begin{minipage}{0.48\textwidth}
\raisebox{-0cm}{\includegraphics[
height=7.0973cm,
width=7.0973cm
]%
{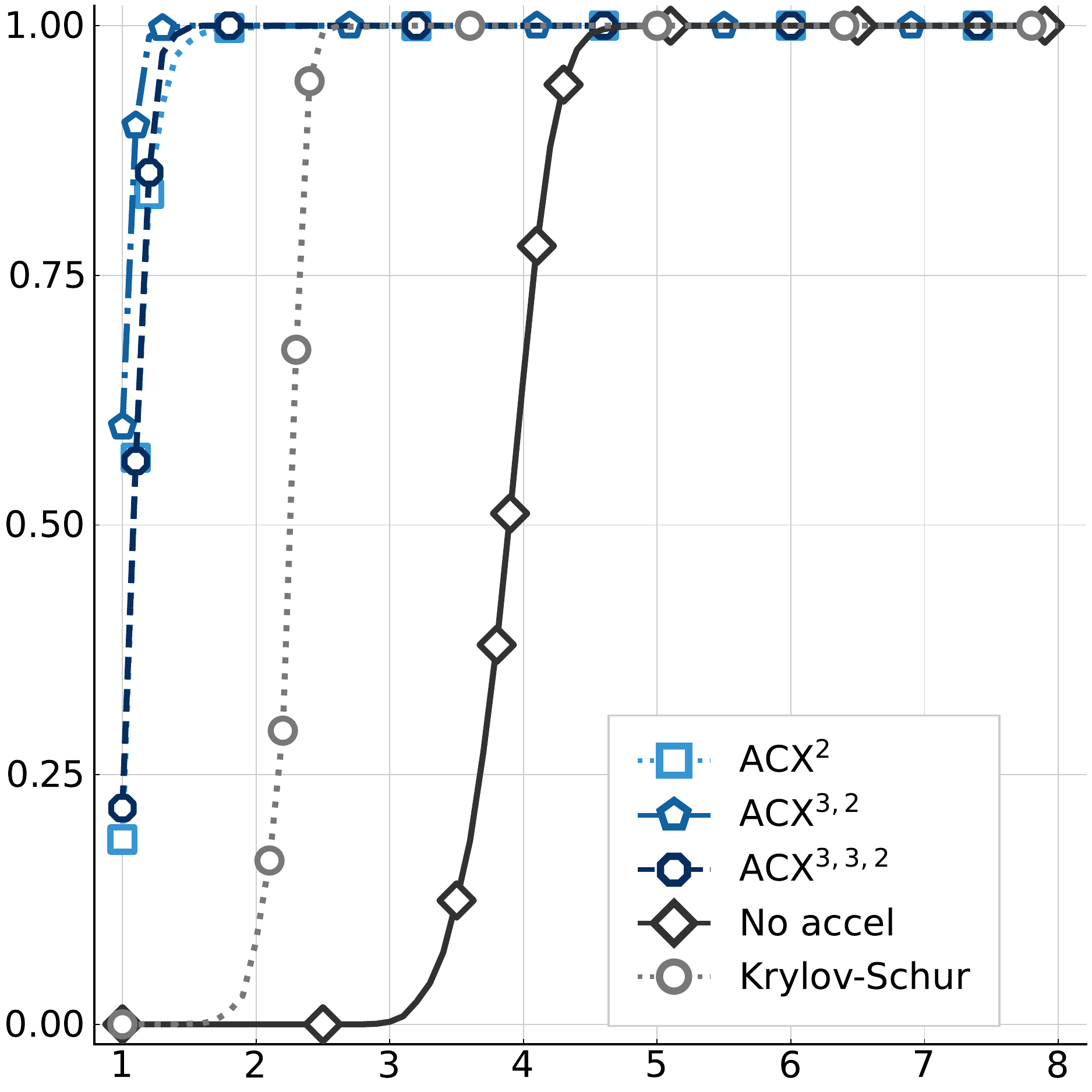}%
}
\caption
{Performance profiles for the power method for dominant eigenvalues \\}%
\label{fig:power}%
\end{minipage}%
\end{figure}%

\subsubsection*{The power method for dominant eigenvalues%
\label{sec:eigenvalues}%
}

Several big data applications with sparse structures involve computing a few
dominant eigenvalues for which iterative approaches like the power method are
clearly preferred. Given a diagonalizable matrix $Q$, the power method
computes the eigenvector associated with the dominant eigenvalue of $Q$ by
combining matrix-vector multiplications and rescaling: \bigskip%

\noindent
\textbf{The power method}

\smallskip%

\noindent
Start with a non-zero vector $x_{0}.$

\smallskip%

\noindent
Compute $x_{k+1}=\frac{Qx_{k}}{\left\Vert Qx_{k}\right\Vert _{\infty}}$ until convergence.

\bigskip

Under mild assumptions, the method generates a series of vectors $x_{k}$
converging to an eigenvector associated with the largest eigenvalue of $Q$ in
magnitude. Unfortunately, the method may be slow to converge if the absolute
value of ratio of the first and second largest eigenvalues is close to one.
\cite{Jennings1971} proposed accelerating it using the same multivariate
version of Aitken's $\Delta^{2}$ process suggested by \cite{Lemarechal1971}.
In the same vein, we expect for good acceleration using ACX schemes. 

For the test, a symmetric $1000\times1000$ sparse matrix was generated with
10\% non-zero elements drawn from $U[0,1]$. Random $U[0,100]$ were added to
the diagonal elements, creating a wide spectrum with similar magnitudes to the
first and second eigenvalues. 

ACX was compared with the unaccelerated power iteration and with the
Krylov-Schur algorithm, implemented in KrylovKit.jl (see \cite{Saad2000}). The
results are displayed in Figures \ref{fig:power} and in Table
\ref{tab:eigenvalues}. All ACX schemes performed well, taking around 3
milliseconds, while the Krylov-Schur took over 6 milliseconds and the
unaccelerated power iteration took over 10 milliseconds.

\subsubsection*{Alternating projections for high-dimensional fixed-effects
models}

Following \cite{VonNeuman1949}, finding the intersection between closed
subspaces has been an active area of research in numerical methods (see
\cite{Escalante2011} for a good treatment of the topic). The method of
alternating projections is a simple algorithm for finding such intersection.
Unfortunately, it can be arbitrarily slow to converge if the Friedrichs angle
between the subspaces is small, spawning a large literature on faster
algorithms. A noteworthy contribution was made by \cite{Gearheart1989} who
suggested a generalized Aitken's acceleration method (Scheme 3.4 in their
paper). Since their method actually corresponds to Lemar\'{e}chal's method
applied to the cyclic projection algorithm (alternating projections applied
sequentially to 2 or more subspaces), ACX constitutes a natural extension.

An interesting application to illustrate the potential of ACX and alternating
projections is a method suggested by \cite{Gaure2013}. In social sciences and
epidemiological studies, researchers often measure the impact of a few
variables on large samples of potentially time-varying observations while
controlling for stable unobserved effects. These could be worker effects, firm
effects, school effects, teacher effects, doctor effects, hospital effects,
etc. An example using public data is \cite{Head2010} who estimated the impact
of colonial relations on trade flows. Rather than reproducing their exact
results, consider the simple model%
\begin{equation}
\ln x_{ijt}=c_{ijt}\beta+d_{it}\gamma_{1}+d_{jt}\gamma_{2}+d_{ij}\gamma
_{3}+u_{ijt}\label{eq:trade}%
\end{equation}
where $x_{ijt}$ is the export volume from country of origin $i$ to country
destination $j$ in year $t$ (keeping only non-zero trade flows). Colonial
status is captured by $c_{ijt}$, a dummy variable that equals 1 if country $i$
is still a colony of country $j$ in year $t$, and $u_{ijt}$ represents
unobserved trade costs between the two countries at time $t$. The fixed
effects are dummies capturing observed or unobserved factors affecting trade
flows of the origin country at time $t$ ($d_{it}$), of the destination country
at time $t$ ($d_{jt}$) and stable characteristics of the exporter-importer
dyad such as common language, distance, etc. ($d_{ij}$). The parameter $\beta$
should capture the impact of being a colony on log exports to the metropolitan
state. Since colonial status is potentially correlated with the fixed effects,
omitting them from the model would most likely bias the estimate of $\beta$.

The full sample of trading countries totals $707,368$ observations and, more
importantly, the model contains $9569+9569+29,603=48,741$ fixed effects.
Needless to say, ordinary least squares estimation is impractical purely in
terms of memory. In contrast, the method proposed by \cite{Gaure2013} is fast
and lightweight. Define $x$ and $c$ as column vectors containing the $x_{ijt}$
and $c_{ijt}$, respectively, and $D=[D_{it},D_{jt},D_{ij}]$ as the matrix of
fixed effects. The method employs the Frisch-Waugh-Lovell theorem
(\cite{Frisch1933}, \cite{Lovell1963}) to estimate $\beta$ by regressing
$M_{D}x$ on $M_{D}c$, where $M_{D}=I-D(D^{\intercal}D)^{-1}D^{\intercal}$
projects onto the orthogonal complement of the column space of $D$. To avoid
computing $M_{D}$ directly, the method uses \cite{Halperin1962}'s Theorem 1:
$M_{D}=\lim_{k\rightarrow\infty}(M_{D_{it}}M_{D_{jt}}M_{D_{ij}})^{k}$. Since
the projections onto each set of fixed effects are equivalent to simple group
demeaning, the algorithm can be very fast and has become standard packages in
many programming languages.

The potential pitfalls of alternating projections remain, however. If the
panel is not well balanced and different sets of dummies are near collinear,
convergence may require many iterations and take longer than alternative
algorithms. In such cases, ACX acceleration could provide discernible benefits.%

\begin{table}[!htbp] \centering
\begin{minipage}{0.99\textwidth}%
\caption
{Performances for the trade flows regression with high-dimensional fixed effects}%
\begin{tabular}
[c]{lrrrrrr}\hline
&  &  &  &  &  & \vspace{-0.4cm}\\
Algorithm / Package &  & \multicolumn{2}{r}{Whole sample} &  &
\multicolumn{2}{r}{Partial sample}\\\cline{3-4}\cline{6-7}
&  &  &  &  &  & \vspace{-0.4cm}\\
&  & Maps & Time (sec.) &  & Maps & Time (sec.)\vspace{0.1cm}\\\hline
&  &  &  &  &  & \vspace{-0.4cm}\\
No acceleration &  & 21 & 0.41 &  & 82 & 0.80\\
ACX$^{2}$ &  & 16 & 0.31 &  & 26 & 0.32\\
ACX$^{3,2}$ &  & 16.5 & 0.31 &  & 25 & 0.29\\
ACX$^{3,3,2}$ &  & 16 & 0.29 &  & 24 & 0.28\\
FixedEffectModel (LSMR) (Julia) &  &  & 0.52 &  &  & 0.42\\
FELM (R) &  &  & 0.84 &  &  & 0.90\\
FixedEffectModel (Python) &  &  & 2.75 &  &  & 5.84\\
reghdfe (Stata) &  &  & 6.69 &  &  & 6.01\vspace{0.1cm}\\\hline
&  &  &  &  &  & \vspace{-0.4cm}\\
Observations &  & \multicolumn{2}{r}{707,368} &  &
\multicolumn{2}{r}{530,504\vspace{0.1cm}}\\\hline
&  &  &  &  &  & \vspace{-0.4cm}\\
\multicolumn{7}{p{15cm}}{Notes: Maps display the average number of iterations
to demean $x$ and $c$. For Python, the time only includes demeaning. Python
and R both use \cite{Gaure2013}'s method as well, but do not display the
number of mappings. Stata's reghdfe is too different for a meaningful
comparison of the number of mappings.\vspace{0.1cm}}\\\hline
\end{tabular}
\label{TableKey copy(2)}%
\label{tab:MAP}%
\end{minipage}%
\end{table}%

Gaure's method was used to estimate (\ref{eq:trade}), with and without ACX
acceleration, with a stopping criterion of $\left\Vert \Delta x\right\Vert
_{2}\leq10^{-8}$. For comparison, the same estimation was done with variety of
equivalent packages in Julia, R, Python, and Stata. The R and Python packages
implement Gaure's method. In Julia, FixedEffectModel.jl uses the LSMR
algorithm (\cite{Fong2011}) based on the Golub-Kahan bidiagonalization
(\cite{Golub1964}). For reghdfe, see \cite{Correia2016}. To test for the
impact of collinearity, the same model was estimated on the full sample and on
a subsample excluding the 25\% trading partners with the longest geographical
distance, making trading blocs more localized and the panel less balanced.

Table \ref{tab:MAP} displays the CPU time and average mapping for the
demeaning of $x$ and $c$, performed independently. For the whole sample, ACX
acceleration only provided a modest advantage over the unaccelerated
alternating projections. For the partial sample however, the unaccelerated
alternating projections was slower than LSMR, but ACX$^{3,3,2}$ reduced the
number of mappings and the compute time by 65\% and was again the fastest overall.

\section{Discussion}

This article has introduced new acceleration methods for fixed-point
iterations. By alternating between squared and cubic extrapolations, the ACX
schemes target specific error components and dynamically speed-up convergence
in subsequent iterations. Thanks to cycling, the extra computation needed for
cubic extrapolations is essentially free. For linear systems, ACX is Q-linear convergent.

Many optimization methods and fixed-point iteration accelerations store
information from past iterates, making them efficient in some contexts but
possibly less so in others. By only extrapolating from two or three mappings,
ACX schemes are remarkably fast, stable, and versatile. Applied to gradient
descent, they are competitive with the best nonlinear solvers. They also speed
up other fixed-point iterations like the EM algorithm, ALS, the power method,
and the method of alternating projections. These represent a small subset of
potential uses, which may extend to image processing, physics, and other big
data applications with sparse representations.

\bibliographystyle{erae}


\appendix

\part*{Appendix}

\section{Proofs}

\begin{proof}
[Proof of Lemma \ref{Lemma1}]From the binomial formula, express $(x+y)^{p}$ as
the sum of $x^{p}+y^{p}$ and an extra term:%
\begin{subequations}
\begin{align}
(x+y)^{p} &  =\sum_{j=0}^{p}\frac{p!}{j!\left(  p-j\right)  !}x^{p-j}%
y^{j}\nonumber\\
&  =x^{p}+y^{p}+\sum_{j=1}^{p-1}\frac{p!}{j!(p-j)!}x^{p-j}y^{j}\nonumber\\
&  =x^{p}+y^{p}+\sum_{j=0}^{p-2}\frac{p!}{(j+1)!(p-j-1)!}x^{p-j-1}%
y^{j+1}\nonumber\\
&  =x^{p}+y^{p}+pxy(R_{1})\label{eq:R1}%
\end{align}
where
\end{subequations}
\[
R_{1}=\sum_{j=0}^{p-2}\frac{p-1}{(j+1)(p-1-j)}\frac{(p-2)!}{j!(p-2-j)!}%
x^{p-2-j}y^{j}.
\]
Consider the case of a general extra term:%
\[
R_{i}=\sum_{j=0}^{p-2i}\frac{i(p-i)}{(j+i)((p-i)-j)}\frac{(p-2i)!}%
{j!(p-2i-j)!}x^{p-2i-j}y^{j}\text{.}%
\]
Let us consider different cases. By direct calculation, if $p$ is even and
$i=p/2$, $R_{i}=1$.\ If $p$ is odd and $i=(p-1)/2$, $R_{i}=x+y$. If $1\leq
i<\left\lfloor p/2\right\rfloor $, using the binomial formula:%
\begin{align*}
R_{i} &  =(x+y)^{p-2i}+\sum_{j=0}^{p-2i}\left(  \frac{i(p-i)}{(j+i)((p-i)-j)}%
-1\right)  \frac{(p-2i)!}{j!(p-2i-j)!}x^{p-2i-j}y^{j}\\
R_{i} &  =(x+y)^{p-2i}-\sum_{j=0}^{p-2i}\frac{j}{j+i}\frac{p-2i-j}{p-i-j}%
\frac{(p-2i)!}{j!(p-2i-j)!}x^{p-2i-j}y^{j}\text{.}%
\end{align*}
If $j=0$ or $j=p-2i$ then $\frac{j}{j+i}\frac{p-2i-j}{p-i-j}=0$. We may
therefore write%
\[
R_{i}=(x+y)^{p-2i}-\sum_{j=1}^{p-2i-1}\frac{j}{j+i}\frac{p-2i-j}{p-i-j}%
\frac{(p-2i)!}{j!(p-2i-j)!}x^{p-2i-j}y^{j}%
\]
and rewrite the sum as%
\begin{align*}
R_{i} &  =(x+y)^{p-2i}-\sum_{j=0}^{p-2i-2}\frac{j+1}{j+1+i}\frac
{p-2i-j-1}{p-i-j-1}\frac{(p-2i)!}{(j+1)!(p-2i-j-1)!}x^{p-2i-j-1}y^{j+1}\\
R_{i} &  =(x+y)^{p-2i}-\left[
\begin{array}
[c]{c}%
xy\frac{(p-2i)(p-2i-1)}{(i+1)(p-(i+1))}\times\\
\sum_{j=0}^{p-2(i+1)}\frac{(i+1)(p-(i+1))}{(j+i+1)(p-(i+1)-j)}\frac
{(p-2(i+1))!}{j!(p-2(i+1)-j)!}x^{p-2(i+1)-j}y^{j}%
\end{array}
\right]  \\
R_{i} &  =(x+y)^{p-2i}-xy\frac{(p-2i)(p-2i-1)}{(i+1)(p-i-1)}R_{i+1}\text{.}%
\end{align*}
By recursively substituting $R_{i+1}$ back in \ref{eq:R1}, we may rewrite the
whole expression using a summation for $i=1$ to $\left\lfloor p/2\right\rfloor
$ and recover (\ref{eq:Lemma1}).
\end{proof}

\newpage

\section{Tables of numerical results%
\label{app:results}%
}%

\FloatBarrier
%

\begin{table}[!htbp]%
\begin{center}%
\begin{minipage}{0.8\textwidth}%
\caption{Average performances: Unconstrained Rosenbrock}%
\label{tab:Rosenbrock}%
\begin{tabular}
[c]{lrrrrr}\hline
&  &  &  &  & \vspace{-0.4cm}\\
Algorithm & Grad. evals & Obj. evals & Time (ms) & Conv. & Minimum\vspace
{0.1cm}\\\hline
&  &  &  &  & \vspace{-0.4cm}\\
ACX$^{2}$ & 907.9 & 11.0 & 14.23 & 1.00 & 2.00E-15\\
ACX$^{3,2}$ & 720.7 & 11.0 & 10.37 & 1.00 & 2.00E-15\\
ACX$^{3,3,2}$ & 596.7 & 11.0 & 8.83 & 1.00 & 1.00E-15\\
L-BFGS & 759.1 & 759.1 & 15.75 & 1.00 & 1.20E-14\\
N-CG & 929.7 & 1855.4 & 25.56 & 1.00 & 3.00E-15\vspace{0.1cm}\\\hline
&  &  &  &  & \vspace{-0.4cm}\\
\multicolumn{6}{l}{Note: 1000 parameters.\vspace{0.1cm}}\\\hline
\end{tabular}%
\end{minipage}%
\end{center}%
\end{table}%
%

\begin{table}[!htbp]%
\begin{center}%
\begin{minipage}{0.8\textwidth}%
\caption{Average performances: Constrained Rosenbrock}%
\label{tab:RosenbrockConstr}%
\begin{tabular}
[c]{lrrrrr}\hline
&  &  &  &  & \vspace{-0.4cm}\\
Algorithm & Grad. evals & Obj. evals & Time (ms) & Conv. & Minimum\vspace
{0.1cm}\\\hline
&  &  &  &  & \vspace{-0.4cm}\\
ACX$^{2}$ & 458.0 & 6.0 & 9.18 & 1.00 & 1.98E+02\\
ACX$^{3,2}$ & 358.6 & 6.0 & 7.08 & 1.00 & 1.98E+02\\
ACX$^{3,3,2}$ & 408.2 & 6.0 & 8.14 & 1.00 & 1.98E+02\\
L-BFGS & 4815.0 & 4815.0 & 295.95 & 1.00 & 1.98E+02\\
N-CG & 3574.1 & 5006.7 & 266.78 & 1.00 & 1.98E+02\vspace{0.1cm}\\\hline
&  &  &  &  & \vspace{-0.4cm}\\
\multicolumn{6}{l}{Notes: 1000 parameters. For ACX, $\omega=0.999$%
.\vspace{0.1cm}}\\\hline
\end{tabular}%
\end{minipage}%
\end{center}%
\end{table}%
%

\begin{table}[!htbp]%
\begin{center}%
\begin{minipage}{0.8\textwidth}%
\caption{Average performances: Logistic regression}%
\label{tab:logistic}%
\begin{tabular}
[c]{lrrrrr}\hline
&  &  &  &  & \vspace{-0.4cm}\\
Algorithm & Grad. evals & Obj. evals & Time (sec.) & Conv. & Minimum\vspace
{0.1cm}\\\hline
&  &  &  &  & \vspace{-0.4cm}\\
ACX$^{2}$ & 53.2 & 5.3 & 29.85 & 1.00 & 612.1403\\
ACX$^{3,2}$ & 51.8 & 5.3 & 25.14 & 1.00 & 612.1403\\
ACX$^{3,3,2}$ & 51.8 & 5.3 & 25.55 & 1.00 & 612.1403\\
L-BFGS & 64.8 & 64.8 & 164.48 & 1.00 & 612.1403\\
N-CG & 61.8 & 94.2 & 257.92 & 1.00 & 612.1403\vspace{0.1cm}\\\hline
&  &  &  &  & \vspace{-0.4cm}\\
\multicolumn{6}{l}{Note: 100 parameters.\vspace{0.1cm}}\\\hline
\end{tabular}%
\end{minipage}%
\end{center}%
\end{table}%
%

\begin{table}[!htbp]%
\begin{center}%
\begin{minipage}{0.8\textwidth}%
\caption{Average performances: EM algorithm for Poisson admixture}%
\label{tab:mixtures}%
\begin{tabular}
[c]{lrrrrr}\hline
&  &  &  &  & \vspace{-0.4cm}\\
Algorithm & Maps & Obj. evals & Time (ms) & Conv. & Minimum\vspace
{0.1cm}\\\hline
&  &  &  &  & \vspace{-0.4cm}\\
ACX$^{2}$ & 102.1 & 0.0 & 0.53 & 1.00 & 1989.9459\\
ACX$^{3,2}$ & 56.0 & 0.0 & 0.31 & 1.00 & 1989.9459\\
ACX$^{3,3,2}$ & 61.1 & 0.0 & 0.33 & 1.00 & 1989.9459\\
QNAMM (3) & 113.0 & 108.0 & 0.62 & 1.00 & 1989.9459\\
No acceleration & 2524.0 & 0.0 & 11.35 & 1.00 & 1989.9459\\
DAAREM (2) & 67.5 & 84.6 & 1.15 & 1.00 & 1989.9459\vspace{0.1cm}\\\hline
&  &  &  &  & \vspace{-0.4cm}\\
\multicolumn{6}{p{13cm}}{Notes: 3 parameters, 0.95\% of draws rejected for
discrepant results. ACX implemented with stabilization mapping and
$\omega=0.8$.\vspace{0.1cm}}\\\hline
\end{tabular}%
\end{minipage}%
\end{center}%
\end{table}%
%

\begin{table}[!htbp]%
\begin{center}%
\begin{minipage}{0.8\textwidth}%
\caption
{Average performances: proportional hazards regression with interval censoring}%
\label{tab:EMPH}%
\begin{tabular}
[c]{lrrrrr}\hline
&  &  &  &  & \vspace{-0.4cm}\\
Algorithm & Maps & Obj. evals & Time (sec.) & Conv. & Minimum\vspace
{0.1cm}\\\hline
&  &  &  &  & \vspace{-0.4cm}\\
ACX$^{2}$ & 2562.1 & 0 & 4.84 & 0.885 & 952.5416754\\
ACX$^{3,2}$ & 333.3 & 0 & 0.64 & 0.991 & 952.3066159\\
ACX$^{3,3,2}$ & 394.3 & 0 & 0.74 & 0.993 & 952.2839194\\
QNAMM (3) & 1840.3 & 1223.6 & 3.43 & 0.995 & 952.0988158\\
DAAREM (5) & 752.2 & 1211.1 & 1.62 & 0.985 & 952.3447906\vspace{0.1cm}\\\hline
&  &  &  &  & \vspace{-0.4cm}\\
\multicolumn{6}{p{13cm}}{Notes: 10 parameters, 17.55\% of draws rejected for
discrepant results.\vspace{0.1cm}}\\\hline
\end{tabular}%
\end{minipage}%
\end{center}%
\end{table}%
%

\begin{table}[!htbp]%
\begin{center}%
\begin{minipage}{0.8\textwidth}%
\caption{Average performances: canonical tensor decomposition}%
\label{tab:tensor}%
\begin{tabular}
[c]{lrrrrr}\hline
&  &  &  &  & \vspace{-0.4cm}\\
Algorithm & Maps & Obj evals & Time (sec.) & Conv. & Minimum\vspace
{0.1cm}\\\hline
&  &  &  &  & \vspace{-0.4cm}\\
ACX$^{2}$ & 228.3 & 57.4 & 1.74 & 1.00 & 0.0741\\
ACX$^{3,2}$ & 90.0 & 18.4 & 0.68 & 1.00 & 0.0741\\
ACX$^{3,3,2}$ & 82.9 & 15.9 & 0.62 & 1.00 & 0.0741\\
QNAMM (5) & 88.8 & 81.8 & 0.74 & 1.00 & 0.0741\\
O-Accel (10) & 56.6 & 56.6 & 0.8 & 1.00 & 0.0741\\
DAAREM (10) & 556.5 & 606.5 & 1.15 & 0.52 & 0.0741\vspace{0.1cm}\\\hline
&  &  &  &  & \vspace{-0.4cm}\\
\multicolumn{6}{p{13cm}}{Notes: 450 parameters, 6.9\% of draws rejected for
discrepant results. One ALS iteration is registered as one mapping.\vspace
{0.1cm}}\\\hline
\end{tabular}%
\end{minipage}%
\end{center}%
\end{table}%
%

\begin{table}[!htbp]%
\begin{center}%
\begin{minipage}{0.5\textwidth}%
\caption{Average performances: dominant eigenvalues}%
\label{tab:eigenvalues}%
\begin{tabular}
[c]{lrrr}\hline
&  &  & \vspace{-0.4cm}\\
Algorithm & Maps & Time (ms) & Conv.\vspace{0.1cm}\\\hline
&  &  & \vspace{-0.4cm}\\
ACX$^{2}$ & 29.9 & 3.07 & 1.00\\
ACX$^{3,2}$ & 28.0 & 2.83 & 1.00\\
ACX$^{3,3,2}$ & 30.1 & 3.03 & 1.00\\
No acceleration & 110.4 & 10.64 & 1.00\\
Krylov-Schur & 53.7 & 6.14 & 1.00\vspace{0.1cm}\\\hline
&  &  & \vspace{-0.4cm}\\
\multicolumn{4}{l}{Note: 1000 parameters.\vspace{0.1cm}}\\\hline
\end{tabular}%
\end{minipage}%
\end{center}%
\end{table}%
%

\FloatBarrier

\section{Software and hardware used%
\label{app:Software}%
}

For the CUTEst and the alternating projections applications, each problem was
run five times if the median time was over 0.1 seconds, and 100 times if it
was below 0.1 seconds. Then the median time is reported. All tests in Julia
were run once before recording time to exclude compile time. See the code for detail.

The main software used for the numerical experiments were Julia v.1.6.1
(\cite{bezanson2017}), FixedEffectModels.jl v1.6.1, Optim.jl v1.3.0
(\cite{mogensen2018optim}), CUTEst.jl v0.11.1, KrylovKit 0.5.3, MATLAB R2018a,
Stata 13, reghdfe (Stata package), Python v3.8.5, FixedEffectModel v0.0.2
(Python), R v4.0.4, and felm (lfe v 2.8-6, R).

All computations were single-threaded, done on HP ZBooks 15 with Intel Core
i7-4900MQ CPUs with 2.80GHz and 32 Go of RAM. All were done on Ubuntu 20.04,
except the alternating projections using MATLAB on Windows 10.
\end{document}